\newtheorem{remark}{Remark}
\newtheorem{lemma}{Lemma}
\newtheorem{theorem}{Theorem}
\newtheorem{corollary}{Corollary}
\newtheorem{prop}{Proposition}
\renewcommand{\eqref}[1]{\hyperref[#1]{(\ref{#1})}}
\newcommand{\PP}{\mathcal{P}}
\newcommand{\II}{\mathcal{I}}
\renewcommand{\phi}{\varphi}
\renewcommand{\rho}{\varrho}
\renewcommand{\theta}{\vartheta}
\newcommand{\scal}[2]{\left(#1,#2\right)}
\DeclareMathOperator{\Span}{span}
\newcommand{\NN}{\mathbb{N}}
\newcommand{\D}{\mathrm{  d}}
\newcommand{\abs}[1]{\left|#1\right|}
\newcommand{\norm}[1]{\left\|#1\right\|}
\title{On norms and traces of the derivatives of~the~$L^2$-projection~error}
\author{Torsten Lin\ss\thanks{Fakult\"at f\"ur Mathematik und Informatik,
        FernUniversit\"at in Hagen,
        Universit\"atsstra{\ss}e 11,
        58095 Hagen,
        Germany,
        \texttt{torsten.linss@fernuni-hagen.de}}
   \and Christos Xenophontos\thanks{Department of Mathematics \& Statistics,
        University of Cyprus, PO BOX 20537, Nicosia 1678, Cyprus,
        \texttt{xenophontos@ucy.ac.cy}.}
}
\begin{document}

\maketitle

\begin{abstract}
  We provide error bounds on the traces and norms of the derivative
  of the $L^2$ projection of an $H^{k}$ function onto the space of
  polynomials of degree $\leq p$.
  The bounds are explicit in the order of differentiation and the polynomial
  degree $p$.

  \emph{Keywords:}
  $L^2$-projection, primitives of Legendre polynomial, traces.

  \emph{AMS subject classification (2000):} 33C45, 33D45, 65N35, 65N99.
\end{abstract}

\section{Introduction}

Let \mbox{$\Omega \coloneqq (-1,1)$} and let $H^{k}(\Omega)$ be the usual
Sobolev space of
functions defined on $\Omega$ with $0,1,\ldots,k$ generalized derivatives
in~$L^2(\Omega)$, the space of square integrable functions.
The $L^2$ projection of a function \mbox{$w\in H^{k}(\Omega)$} onto the space
$\PP_p(\Omega)$ of polynomials of degree $\leq p$ on $\Omega$,
is denoted by $\pi_p w \in \PP_p(\Omega)$ and defined via
\begin{gather*}
  \scal{ \pi_p w - w}{v} = 0 \ \ \forall \; v \in \PP_p(\Omega),
\end{gather*}
where $\scal\cdot\cdot$ denotes the usual $L^2(\Omega)$ inner product.
Clearly,
\begin{gather*}
  \norm{\pi_p w - w}_{0} \le \norm{ w }_{0},
\end{gather*}
where $\norm{\cdot}_0$ is the norm in \mbox{$L^2(\Omega)$} induced by the scalar product $\scal\cdot\cdot$.
For functions \mbox{$w\in H^k(\Omega)$}, we set
$\abs{w}_k\coloneqq \norm{w^{(k)}}_0$.
It was shown in \cite[Thm.\,3.11]{MR1695813} that
\begin{gather}
  \label{eq:L2proj_bound}
  \norm{\pi_p w - w}_{0}^2
    \le \frac{(p+1-s)!}{(p+1+s)!} \abs{w}_s^2\,,
       \ \ s=0,\dots,\min\left\{p+1,k\right\}.
\end{gather}
The above shows that if $\abs{w}_s$ is bounded for \mbox{$s\to\infty$},
then \mbox{$\pi_p w \to w$} as \mbox{$p \to \infty$}.

Houston et al., \cite[Lemma~3.5]{MR1897953}, proved for the traces of the
error of the $L_2$-projection that
\begin{gather}
  \label{eq:L2proj_bound_trace}
  \abs{\left(\pi_p w - w\right)(\pm 1)}^2
    \le \frac{1}{2p+1} \frac{(p-s)!}{(p+s)!} \abs{w}_{s+1}^2\,,
       \ \ s=0,\dots,\min\left\{p,k-1\right\}.
\end{gather}

In this paper, we will generalize these result to give bounds on the
norms and traces of the \emph{derivatives} of the $L_2$-projection error.
For functions $w\in H^{k+s}(\Omega)$, \autoref{cor:L2derivs} ahead gives the
bounds
\begin{gather*}
  \abs{w - \pi_p w}_\nu^2
    \le C\, 2^{2\nu -1} p^{2\nu-1} \frac{(p-s)!}{(p+s)!}
        \abs{w}_{s+\nu}^2 \quad\text{for} \ p\ge 2k-1, \ s\in(0,\min\{k,p-\nu\}),
         \ p\ge \nu,
\end{gather*}
where here and throughout $C$ is a generic positive constant that is independent of
the order $\nu$ of the derivative, of the polynomial degree $p$ and of the
regularity of $w$.
These bounds agree with the corresponding ones in \cite{MR1897953}
(for $\nu=0,1$), and in \cite{NXFL} (for $\nu=2$).

\autoref{thm:main}, our second main result, states that
\begin{gather*}
  \abs{\left(w - \pi_p w\right)^{(\nu)}(\pm1)}^2
    \le \norm{q_{p,\nu}}_0^2 \frac{(p-\nu-s)!}{(p+\nu+s)!} \abs{w}_{s+\nu+1}^2,
       \ \ s=0,\dots,\min\left\{p,k-\nu-1\right\},
\end{gather*}
where the $q_{p,\nu}$ are a family of polynomials (defined in
\autoref{lem:point-Hr}), that satify the bounds
\begin{gather*}
  \norm{q_{p,\nu}}_0^2 \le C p^{2\nu-1}, \ \ p,\nu\in \NN_0.
\end{gather*}

For $\nu=0$ our analysis yields an improvement
over~\cite[Lemma~3.5]{MR1897953}.

These results are useful tools in the analysis of various flavours of finite
element methods, like (hybridized) discontinuous \textsc{Galerkin} (DG, HDG),
weak \textsc{Galerkin} (WG) or hybrid hight order methods (HHO)
for higher order equations (order 4 or higher)
when bounds for the traces of derivatives are required.

In the process, we also obtain expressions for the $L_2$-norm of the primitives
of the \textsc{Legendre} polynomials, see~\autoref{lem:L2norm_psi}, that seem
to be novel results as well.

\section{\boldmath $H^\nu$-norm error bounds on the $L^2$ projection}

We start with bounding $\abs{w - \pi_p w}_\nu$.

Let $p,k\in\NN$, $p\ge k$.
We introduce an interpolation operator
$\II_{p,k}\colon H^{k}(\Omega) \to \PP_p$, by
\begin{align}
   \left(\II_{p,k}w\right)^{(k)} & = \pi_{p-k} \left(w^{(k)}\right) , \label{eq:interp} \\
  \intertext{and}
   \left(\II_{p,k}w\right)^{(i)}(-1) & = w^{(i)}(-1)\,, \ \ i=0,\dots,k-1. \notag
\end{align}
Note that \eqref{eq:interp} implies
$\left(\II_{p,k}w\right)^{(i)}(1) = w^{(i)}(1)$, $i=0,\dots,k-1$.

\begin{prop} \cite[Corollary 2]{MR2800710}\label{prop:Beirao}
  Let $p, k, s\in \NN$, such that $p\ge 2k-1$.
  Set $\kappa=p-k+1$, and suppose $w \in H^{k+s}(\Omega)$.
  Then, if $s \leq \kappa$,
  \begin{gather*}
    \abs{w-\II_{p, k} w}_j^2
       \le C \frac{(\kappa-s)!}{(\kappa+s)!}
             \frac{(\kappa-(k-j))!}{(\kappa+(k-j))!}
             \abs{w}_{k+s}^2, \; j=0,1,\dots,k-1.
  \end{gather*}
\end{prop}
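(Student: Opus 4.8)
\emph{Proof plan.}
The plan is to reduce the statement to an estimate on iterated primitives of an $L^2$-projection error and then to invoke the properties of the primitives of the \textsc{Legendre} polynomials. Put $e\coloneqq w-\II_{p,k}w$. By the defining relation~\eqref{eq:interp}, $e^{(k)}=w^{(k)}-\pi_{p-k}\bigl(w^{(k)}\bigr)$ is precisely the $L^2$-projection error of $w^{(k)}\in H^{s}(\Omega)$ onto $\PP_{p-k}$, whereas the interpolation conditions force $e^{(i)}(-1)=0$ for $i=0,\dots,k-1$. Hence, writing $r\coloneqq k-j\in\{1,\dots,k\}$, the function $e^{(j)}$ is the $r$-fold primitive of $e^{(k)}$ all of whose derivatives up to order $r-1$ vanish at $-1$, and it remains to bound $\norm{e^{(j)}}_0$. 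Since $e^{(k)}\perp\PP_{p-k}$, its \textsc{Legendre} expansion reads $e^{(k)}=\sum_{m\ge\kappa}a_m L_m$ with $\kappa=p-k+1$, so $\norm{e^{(k)}}_0^2=\sum_{m\ge\kappa}\norm{L_m}_0^2\,a_m^2$; and \eqref{eq:L2proj_bound}, applied to $w^{(k)}$ on $\PP_{p-k}$ (permissible since $s\le\kappa$), gives $\norm{e^{(k)}}_0^2\le\frac{(\kappa-s)!}{(\kappa+s)!}\abs{w}_{k+s}^2$.

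Next I would study the primitives of the \textsc{Legendre} polynomials themselves. Iterating the identity $\int_{-1}^x L_m\,\D t=\frac{L_{m+1}(x)-L_{m-1}(x)}{2m+1}$ shows that the $r$-fold primitive $\psi_m^{[r]}$ of $L_m$, normalised so that it and its first $r-1$ derivatives vanish at $-1$, is a linear combination of $L_{m-r},L_{m-r+2},\dots,L_{m+r}$ with coefficients equal to products of $r$ factors of the form $(2m'+1)^{-1}$. The quantitative ingredient is a bound of the shape
\begin{gather*}
  \norm{\psi_m^{[r]}}_0^2\le C_r\,\frac{(m-r)!}{(m+r)!}\,\frac{1}{2m+1}\,,\qquad m\ge r\,,
\end{gather*}
which is exactly what \autoref{lem:L2norm_psi} provides (indeed in closed form). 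Now $e^{(j)}=\sum_{m\ge\kappa}a_m\psi_m^{[r]}$ by linearity, and since the $\psi_m^{[r]}$ have \textsc{Legendre} bandwidth at most $2r$ — so $\bigscal{\psi_m^{[r]}}{\psi_{m'}^{[r]}}=0$ whenever $\abs{m-m'}>2r$ — a \textsc{Schur}-test / \textsc{Cauchy}--\textsc{Schwarz} argument yields $\norm{e^{(j)}}_0^2\le c_r\sum_{m\ge\kappa}a_m^2\,\norm{\psi_m^{[r]}}_0^2$ for a constant $c_r$ depending only on $r$.

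To finish, I would use that $m\mapsto\frac{(m-r)!}{(m+r)!}=\bigl((m-r+1)(m-r+2)\cdots(m+r)\bigr)^{-1}$ is decreasing, so $\norm{\psi_m^{[r]}}_0^2\le C_r\frac{(\kappa-r)!}{(\kappa+r)!}\frac{1}{2m+1}$ for all $m\ge\kappa$; substituting this and recalling $\norm{L_m}_0^2=\frac{2}{2m+1}$ gives
\begin{gather*}
  \norm{e^{(j)}}_0^2\le C\,\frac{(\kappa-r)!}{(\kappa+r)!}\sum_{m\ge\kappa}\frac{a_m^2}{2m+1}\le C\,\frac{(\kappa-r)!}{(\kappa+r)!}\,\norm{e^{(k)}}_0^2\,.
\end{gather*}
Combining with the first-step bound on $\norm{e^{(k)}}_0^2$ and putting $r=k-j$ delivers exactly $\abs{w-\II_{p,k}w}_j^2\le C\,\frac{(\kappa-s)!}{(\kappa+s)!}\frac{(\kappa-(k-j))!}{(\kappa+(k-j))!}\abs{w}_{k+s}^2$.

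The hard part is the interplay between the norm estimate for the iterated \textsc{Legendre} primitives and the control of their mutual inner products. A naive degree-by-degree recursion — integrate $e^{(k)}$ once, observe that the result is orthogonal to $\PP_{\kappa-2}$, integrate again, and so on — is too wasteful: it produces the factor $\frac{(\kappa-r)!(\kappa-r+1)!}{\kappa!(\kappa+1)!}$, which agrees with the target $\frac{(\kappa-r)!}{(\kappa+r)!}$ only for $r\le 1$ and is strictly larger for $r\ge 2$. One therefore has to argue \textsc{Legendre}-mode by \textsc{Legendre}-mode and exploit the monotonicity of $\frac{(m-r)!}{(m+r)!}$; and making the constant independent of $r$ (and not merely of $m$) ultimately rests on knowing that the \textsc{Gram} matrix of the normalised primitives $\psi_m^{[r]}$ has bounded spectral norm, which is precisely where the explicit formulas behind \autoref{lem:L2norm_psi} do the real work.
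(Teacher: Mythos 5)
Note first that the paper offers no proof of this proposition: it is quoted verbatim from \cite{MR2800710}, so your attempt can only be compared with the intended external argument, not with an in-paper one. Your reduction is the right one and is non-circular: $e\coloneqq w-\II_{p,k}w$ satisfies $e^{(k)}=w^{(k)}-\pi_{p-k}\bigl(w^{(k)}\bigr)$ and $e^{(i)}(-1)=0$ for $i<k$, hence $e^{(j)}=\sum_{m\ge\kappa}a_m\psi_{m,r}$ with $r=k-j$ and the paper's integrated Legendre polynomials (note $m\ge\kappa\ge k\ge r$ under $p\ge 2k-1$, so $\psi_{m,r}\in\Lambda_{m-r}^{m+r}$ is legitimate); \autoref{lem:L2norm_psi} is proved independently of this proposition via \autoref{prop:leg:scal}, and it does give $\norm{\psi_{m,r}}_0^2\le C_r\,\frac{(m-r)!}{(m+r)!}\frac{1}{2m+1}$ with $C_r$ in fact boundable by an absolute constant; together with \eqref{eq:L2proj_bound} for $\norm{e^{(k)}}_0$ and the monotonicity of $m\mapsto (m-r)!/(m+r)!$, this is a sound skeleton.

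The genuine gap is the constant in the step $\bignorm{\sum_m a_m\psi_{m,r}}_0^2\le c_r\sum_m a_m^2\norm{\psi_{m,r}}_0^2$. Bandwidth $2r$ plus Cauchy--Schwarz only yields $c_r\le 4r+1$, so what you have actually proved is the estimate with $C$ growing like $k-j$. That is weaker than the statement: the cited result carries an absolute constant, and the paper needs one, since \autoref{prop:Beirao} is applied in the proof of \autoref{cor:L2derivs} with indices tied to $\nu$ while the constant there is claimed independent of $\nu$. Your proposed repair --- ``the Gram matrix of the normalised primitives has bounded spectral norm'' --- is asserted, not proved, and it cannot follow from absolute-value estimates: by \autoref{prop:leg:scal} the normalised correlation between $\psi_{p-i,r}$ and $\psi_{p+i,r}$ has magnitude about $(r!)^2/\bigl((r+i)!(r-i)!\bigr)\approx e^{-i^2/r}$ (times factors $\le 1$), so for $p\gg r$ the absolute row sums of that Gram matrix grow like $\sqrt{r}$, and any $r$-uniform spectral bound would have to exploit the alternating sign $(-1)^i$ in \autoref{prop:leg:scal} (or proceed by a different mechanism altogether, as in \cite{MR2800710}). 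Until that nontrivial piece is supplied, your argument establishes the proposition only with $C=C(k-j)$; with a $k$-dependent constant accepted, the plan is essentially complete modulo routine verifications (term-by-term integration of the Legendre series, the case $m=\kappa=r$ in the bound for $C_r$).
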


Using \autoref{prop:Beirao} we establish the following.
\begin{theorem}\label{cor:L2derivs}
  Let $p, k, s\in\NN$ such that $p\ge 2k-1$,
  and let $w \in H^{k+s}(\Omega)$.
  Then, with $\pi_p w \in \PP_p$ its $L^2$ projection,
  there holds for $s \in (0, \min\{k,p-\nu\})$, $p\ge \nu$,
  \begin{gather*}
    \abs{w - \pi_p w}_\nu^2
    \le C 2^{2\nu -1} p^{2\nu-1} \frac{(p-s)!}{(p+s)!} \abs{w}_{(s+\nu)}^2.
  \end{gather*}
\end{theorem}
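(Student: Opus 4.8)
The plan is to compare $\pi_p w$ with the quasi‑interpolant $\II_{p,\nu}w$ of \eqref{eq:interp} (taking $k=\nu$ there; since $\nu\le k$ and $p\ge 2k-1$ we have $p\ge 2\nu-1$, so $\II_{p,\nu}$ is well defined and \autoref{prop:Beirao} applies). The cases $\nu=0$ and $\nu=1$ reduce to \eqref{eq:L2proj_bound} and to the classical $H^1$‑bound, so I may assume $\nu\ge 2$; these low‑order cases also serve as a sanity check, since they must reproduce \cite{MR1897953,NXFL}. Writing
\[
  \abs{w-\pi_p w}_\nu \le \abs{w-\II_{p,\nu}w}_\nu + \abs{\II_{p,\nu}w-\pi_p w}_\nu ,
\]
the first summand is immediate: \eqref{eq:interp} gives $(\II_{p,\nu}w)^{(\nu)}=\pi_{p-\nu}(w^{(\nu)})$, whence $\abs{w-\II_{p,\nu}w}_\nu=\norm{w^{(\nu)}-\pi_{p-\nu}(w^{(\nu)})}_0$, and \eqref{eq:L2proj_bound} applied to $w^{(\nu)}\in H^s(\Omega)$ with polynomial degree $p-\nu$ bounds this by $\tfrac{(p-\nu+1-s)!}{(p-\nu+1+s)!}\abs{w}_{s+\nu}^2$.

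For the second summand I would exploit that $\II_{p,\nu}w$ and $\pi_p w$ both lie in $\PP_p$ and, crucially, that $\II_{p,\nu}w-\pi_p w=\pi_p(w-\II_{p,\nu}w)$ is the $L^2$‑projection onto $\PP_p$ of the interpolation error. By $L^2$‑optimality of $\pi_p$, $\norm{\II_{p,\nu}w-\pi_p w}_0\le 2\norm{w-\II_{p,\nu}w}_0$, which \autoref{prop:Beirao} (with $k=\nu$, $j=0$, $\kappa=p-\nu+1$) bounds by $C\,\tfrac{(p-\nu+1-s)!}{(p-\nu+1+s)!}\tfrac{(p-2\nu+1)!}{(p+1)!}\abs{w}_{s+\nu}^2$; moreover \autoref{prop:Beirao} supplies the intermediate seminorms $\abs{w-\II_{p,\nu}w}_j$ for $j=0,\dots,\nu-1$ as well. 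The delicate step is to pass from this information to a bound on $\abs{\II_{p,\nu}w-\pi_p w}_\nu$ without wasting a power of $p$: a blanket inverse estimate on $\PP_p$ costs $p^{2\nu}$, one power too many. The gain must come from the frequency structure of $w-\II_{p,\nu}w$: its $\nu$‑th derivative is the projection error $w^{(\nu)}-\pi_{p-\nu}(w^{(\nu)})$, which carries only \textsc{Legendre} modes of index $>p-\nu$, so the \textsc{Legendre} spectrum of $\II_{p,\nu}w-\pi_p w$ sits in a band of width $O(\nu)$ just below $p$, together with a polynomial tail of degree $<\nu$ that the $\nu$‑th derivative annihilates. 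Estimating $\abs{\II_{p,\nu}w-\pi_p w}_\nu$ therefore reduces to controlling $\norm{q^{(\nu)}}_0$ for such band‑limited $q$, which in turn rests on precise information about the $L^2$ inner products of the $\nu$‑th derivatives of the \textsc{Legendre} polynomials — equivalently, of their $\nu$‑fold primitives — exactly the computations packaged in \autoref{lem:L2norm_psi} and \autoref{lem:point-Hr}. This is where the explicit constants $2^{2\nu-1}$ and $p^{2\nu-1}$ are produced.

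To finish, I would collect the two contributions and unwind the factorials via the elementary identity
\[
  \frac{(p-\nu+1-s)!}{(p-\nu+1+s)!}
     =\frac{(p-s)!}{(p+s)!}\prod_{i=0}^{\nu-2}\frac{p+s-i}{p-s-i},
\]
in which each of the $\nu-1$ factors lies in $[1,p)$ because the constraints $s<k$, $\nu\le k$ and $p\ge 2k-1$ force $p-s-i\ge 2$ and $p+s-i<p(p-s-i)$; hence $\prod_{i=0}^{\nu-2}\tfrac{p+s-i}{p-s-i}\le p^{\nu-1}$, and the crude bounds on these ratios are what leave behind the factor $2^{2\nu-1}$. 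After cancelling the $\tfrac{(p-2\nu+1)!}{(p+1)!}\asymp p^{-2\nu}$ against the band‑limited seminorm estimate, the net amplification is $\le C\,2^{2\nu-1}p^{2\nu-1}$, giving the claimed bound. I expect the sharp band‑limited seminorm estimate (equivalently, the precise $p$‑ and $\nu$‑dependence of the relevant \textsc{Legendre}‑primitive norms) to be the main obstacle; the factorial bookkeeping and the reduction via the triangle inequality are routine.
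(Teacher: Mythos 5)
Your decomposition is the same as the paper's first move: split off the interpolant $\II_{p,\nu}$ from \eqref{eq:interp}, observe $(\II_{p,\nu}w)^{(\nu)}=\pi_{p-\nu}(w^{(\nu)})$, and bound the first piece by \eqref{eq:L2proj_bound}. The divergence — and the gap — is in the second piece, $\abs{\II_{p,\nu}w-\pi_p w}_\nu=\norm{\bigl(\pi_p(w-\II_{p,\nu}w)\bigr)^{(\nu)}}_0$. You only have an $L^2$ bound for $\pi_p(w-\II_{p,\nu}w)$ (via \autoref{prop:Beirao} with $k=\nu$, $j=0$) and then propose to upgrade it to the $H^\nu$-seminorm by a ``band-limited inverse estimate'', which you neither state nor prove; you defer it to \autoref{lem:L2norm_psi} and \autoref{lem:point-Hr}, but those lemmas compute norms of primitives of Legendre polynomials and give a trace representation — they enter only the proof of \autoref{thm:main} and contain no estimate of $\norm{q^{(\nu)}}_0$ for $q$ in a Legendre band. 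Moreover the power counting behind ``one power too many'' is off: differentiating even a single mode costs $\norm{L_i^{(\nu)}}_0^2\sim c_\nu\, i^{4\nu-1}\norm{L_i}_0^2$, not $i^{2\nu}$, and after multiplying by the $j=0$ factor $\tfrac{(p-2\nu+1)!}{(p+1)!}\sim p^{-2\nu}$ you are still left with $\tfrac{(p-\nu+1-s)!}{(p-\nu+1+s)!}$ rather than $\tfrac{(p-s)!}{(p+s)!}$, i.e.\ a further loss of up to $p^{\nu-1}$ that your accounting does not absorb. So the decisive step of the proof is missing, and the route you sketch for it would not obviously close.

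The paper avoids this entirely: \autoref{prop:Beirao} is not only an $L^2$ estimate but gives $\abs{w-\II_{p,k}w}_j$ for all $j=0,\dots,k-1$, so (keeping the full $k$ of the hypothesis, with $\nu\le k-1$, rather than instantiating $k=\nu$ as you do — that choice caps $j$ at $\nu-1$ and is exactly what forces you into the inverse-estimate detour) one reads off the interpolation error directly in the $H^\nu$-seminorm with $j=\nu$, $\kappa=p-k+1$. After that the whole proof is the factorial bookkeeping that your final paragraph essentially reproduces: the first term contributes $\prod_{j=0}^{\nu-2}\tfrac{p+s-j}{p-s-j}\le p^{\nu-1}$, the Beir\~ao term contributes $(2p)^{k-1}(2p)^{2(\nu-k)}$, and together these give the factor $2^{2\nu-1}p^{2\nu-1}$. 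No inverse estimate, band-limitation argument, or Legendre-primitive computation is needed for this theorem.
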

\begin{proof}
  Using the interpolation operator $\II_{p,\nu}$ given in \eqref{eq:interp}, we write
  \begin{gather*}
    w - \pi_p w = w - \II_{p,\nu} w + \II_{p,\nu} w - \pi_p w
                = w - \II_{p,\nu} w - \pi_p\left(w -\II_{p,\nu} w\right).
  \end{gather*}
  Thus,
  \begin{gather*}
    \left(w - \pi_p w\right)^{(\nu)}
      = w^{(\nu)} - \left(\II_{p,\nu}  w \right)^{(\nu)}
        - \left(\pi_p w\right)^{(\nu)} + \left(\II_{p,\nu} (\pi_p w)\right)^{(\nu)},
  \end{gather*}
  and since $\left(\II_{p,\nu}w\right)^{(\nu)} = \pi_{p-\nu}\left(w^{(\nu)}\right)$,
 \begin{gather*}
  \abs{w - \pi_p w}_\nu^2 \le
     \norm{w^{(\nu)} - \pi_{p-\nu}\left(w^{(\nu)}\right)}_{0}^2
       + \norm{(\II_{p,\nu} (\pi_p w))^{(\nu)} -  (\pi_p w)^{(\nu)}}_0^2.
  \end{gather*}
  The first term in the upper bound above is the $L^2$ projection error of $w^{(\nu)}$
  projected onto $\mathcal{P}_{p-\nu}$, while the second term is the interpolation error.
  Therefore, using \eqref{eq:L2proj_bound} and \autoref{prop:Beirao}, we have
  \begin{align*}
    \abs{w - \pi_p w}_\nu^2
      & \leq \frac{(p+1-\nu-s)!}{(p+1-\nu+s)!} \abs{w}_{\nu+s}^2
               + C \frac{(p-k+1-s)! (p-2k+1+\nu)!}{(p-k+1+s)!(p+1-\nu)!}
                   \abs{w}_{\nu+s}^2 \\
      & \le C \frac{(p-s)!}{(p+s)!} \abs{w}_{\nu+s} ^2 E,
  \end{align*}
  where
  \begin{gather*}
    E \coloneqq \frac{(p+s)!(p+1-\nu-s)!}{(p-s)!(p+1-\nu+s)!}
                  + \frac{(p+s)! (p-k+1-s)! (p-2k+1+\nu)!}{(p-s)!(p-k+1+s)!(p+1-\nu)!}.
  \end{gather*}
  In order to estimate $E$, we note that 
  \begin{gather*}
    \frac{(p+s)!(p+1-\nu-s)!}{(p-s)!(p+1-\nu+s)!}
       = \frac{(p+s)!}{(p+s+1-\nu)!} \cdot \frac{(p-s-(\nu-1))!}{(p-s)!}
       = \prod_{j=0}^{\nu-2} \frac{p+s-j}{p-s-j} \le p^{\nu-1},
  \end{gather*}
  where we used $s \leq p - \nu$, hence
  \begin{gather*}
    p-s-j \geq p-(p-\nu)-(\nu-2)=2,
  \end{gather*}
  and
  \begin{gather*}
    p+s-j \leq p+s \leq 2 p.
  \end{gather*}
  Similarly,
  \begin{gather*}
    \frac{(p+s)! (p-k+1-s)!}{(p-s)!(p-k+1+s)!}
      = \prod_{j=0}^{k-2} \frac{p+s-k+2+j}{p-s-k+2+j}
      \le (p+s)^{k-1} \leq (2p)^{k-1}.
  \end{gather*}
  Finally,
  \begin{gather*}
    \frac{(p-2k+1+\nu)!}{(p+1-\nu)!} \leq  (2 p)^{2(\nu-k)}.
  \end{gather*}
  Combining the above we see that
  \begin{gather*}
    E \le p^{\nu-1} + (2p)^{k-1} (2 p)^{2(\nu-k)} \leq 2^{2\nu -1} p^{2\nu-1},
  \end{gather*}
  and the proof is complete.
\end{proof}

\section{\textsc{Legendre} polynomials, their derivatives and primitives}
\label{sect:legendre}

In preparation for our second main result to be derived in \autoref{sect:main2}
we need to study some properties of the integrated \textsc{Legendre} polynomials.

Let $L_j$ denote the \textsc{Legendre} polynomial of degree \mbox{$j\in\NN_0$}.
The \textsc{Legendre} polynomials form an orthogonal basis of $L^2(\Omega)$
with respect to the $L^2(\Omega)$ scalar product:
\begin{gather*}
   \scal{L_i}{L_j} = \frac{2}{2i+1} \delta_{ij}, \ \ i,j\in \NN_0,
\end{gather*}
where $\delta_{ij}$ is \textsc{Kronecker's} delta.
This implies that
\begin{gather*}
   \norm{L_i}_0^2 = \frac{2}{2i+1}, \ \ i\in \NN_0.
\end{gather*}

Any function $w\in L^2(\Omega)$ can be represented as
\begin{align*}
  w & = \sum_{i=0}^{\infty} b_i  L_i \, , \ \
             b_i = \frac{\scal{w}{L_i}}{\scal{L_i}{L_i}}
                 = \frac{2i+1}{2} \scal{w}{L_i},
 \intertext{and its $L^2$-projection onto $\mathcal{P}_p(\Omega)$ by}
  \pi_p w & = \sum_{i=0}^{p} b_i  L_i \, , \ \
      \text{because} \ \ \scal{w}{L_i} = \scal{\pi_p w}{L_i} \ \
      \text{for} \ i=0,\dots,p.
\end{align*}

For \mbox{$i,j\in\NN_0$}, \mbox{$i\le j$} we define
the spaces
$\Lambda_{i}^{j} \coloneqq \Span\bigl\{L_i,L_{i+1}\dots,L_{j}\bigr\}$.

The $n$-th \textbf{primitive} $\psi_{i,n}$ of the \textsc{Legendre} polynomial
$L_i$ is defined as follows:
\begin{gather}\label{eq:primitives}
  \psi_{i,0} \coloneqq L_i , \ \ \
  \psi_{i,n}(x) \coloneqq \int_{-1}^x \psi_{i,n-1}(\zeta)\D \zeta, \ \ i\in\NN.
\end{gather}
From $\psi_{i,1} = \left(L_{i+1}-L_{i-1}\right)/(2i+1)$, $i\in\NN$, one obtains
the recurrence
\begin{gather}\label{eq:Lambda_psi}
  \psi_{i,n} = \frac{\psi_{i+1,n-1}-\psi_{i-1,n-1}}{2i+1}
         \in \Lambda_{i-n}^{i+n}\,,
      \ \ \ i,n \in \NN, \ i\ge n.
\end{gather}
Note that for $0\leq \nu \leq n$, there holds
\begin{gather*}
   \psi^{(\nu)}_{i,n} = \psi_{i,n - \nu} \; \text{ and } \; \psi_{i, n}^{(n)}=L_i\; , \; i \ge n.
\end{gather*}

Later, we shall need their values at $\pm 1$:
\begin{align}\notag
    \psi_{i,n}^{(\nu)} (\pm 1) & = 0, \ \ \nu=0,\dots,n-1, 
       \\
    \label{leg:prim:ne0}
    \psi_{i,n}^{(\nu)} (\pm 1) & =
       L_{i}^{(\nu-n)} (\pm 1) =
       \frac{(\pm 1)^{i+n-\nu}}{2 ^{\nu-n}\,(\nu-n)!} \, \frac{(i+\nu-n)!}{(i-\nu+n)!}
      , \ \ \nu=n,n+1,n+i.
\end{align}
For $n=0$ we recover the well known results for the \textsc{Legendre} polynomials. The above follow from
 \cite[\S 8.961]{MR2360010}, and the relationship between \textsc{Legendre} and \textsc{Jacobi} polynomials.

\newcommand{\tp}{\tilde{p}}

\begin{prop}\label{prop:leg:scal}
  Let $\psi_{p,n}$ be the $n$-th primitive of the Legendre polynomial $L_p$,
  as defined by \eqref{eq:primitives}, $p,n\in\NN_0$.

  Then, for $p=n,n+1,\dots$,
  \begin{gather*}
    \scal{\psi_{p+k,n}}{\psi_{p-k,n}} =
      \begin{cases}
        \displaystyle
        (-1)^k \frac{2^{n+1}\,n!}{(n+k)! (n-k)!}
               \frac{1}{2p+1}
               \prod_{i=1}^n \frac{2i-1}{(2p+1)^2 - 4i^2}
          & \text{for} \ \ k=0,1,\dots,n, \\
        0 & \text{for} \ \ k=n+1,\dots p.
      \end{cases}
  \end{gather*}
\end{prop}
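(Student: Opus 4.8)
The proposition splits into the two listed cases, and the vanishing case $k\in\{n+1,\dots,p\}$ costs nothing: by \eqref{eq:Lambda_psi} one has $\psi_{p+k,n}\in\Lambda_{p+k-n}^{p+k+n}$, so $\psi_{p+k,n}$ is $L^2$-orthogonal to every polynomial of degree $\le p+k-n-1$, while $\deg\psi_{p-k,n}=p-k+n$; the hypothesis $k\ge n+1$ gives $p-k+n\le p+k-n-1$, whence the scalar product is $0$. It remains to treat $0\le k\le n$.

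Here I would argue by induction on $n$, the base case $n=0$ being $\scal{\psi_{p,0}}{\psi_{p,0}}=\norm{L_p}_0^2=2/(2p+1)$, which is the asserted value (empty product). For the inductive step, fix $p\ge n$ and $0\le k\le n$, and suppose first that $p-k\ge 1$ (so that $\psi_{p-k,n}\ne\psi_{0,n}$). Since $\psi_{i,1}=(L_{i+1}-L_{i-1})/(2i+1)$ holds for every $i\ge 1$, integrating it repeatedly shows that the recurrence \eqref{eq:Lambda_psi} is in fact valid for all $i\ge 1$, not only for $i\ge n$; applying it to both $\psi_{p+k,n}$ and $\psi_{p-k,n}$ and multiplying out, $\scal{\psi_{p+k,n}}{\psi_{p-k,n}}$ becomes a linear combination with denominator $(2p+2k+1)(2p-2k+1)$ of
\begin{gather*}
  \scal{\psi_{p+k+1,n-1}}{\psi_{p-k+1,n-1}},\quad
  \scal{\psi_{p+k+1,n-1}}{\psi_{p-k-1,n-1}},\quad
  \scal{\psi_{p+k-1,n-1}}{\psi_{p-k+1,n-1}},\quad
  \scal{\psi_{p+k-1,n-1}}{\psi_{p-k-1,n-1}},
\end{gather*}
which are precisely the quantities of the proposition at level $n-1$ with $(p,k)$ replaced by $(p+1,k)$, $(p,k+1)$, $(p,k-1)$ and $(p-1,k)$ respectively (for $k=0$ the third one is, by symmetry of the scalar product, the $(p,1)$-instance). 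In every one of them the first index is $\ge n-1$, so the second index being $\le n-1$ is covered by the induction hypothesis, and the second index being $\ge n$ is covered by the vanishing case already settled at level $n-1$. Writing $F_n(p,k)$ for the right-hand side of the proposition, extended to all integers $k$ by the convention $1/m!\coloneqq 0$ for $m<0$ (so that $F_n$ is even in $k$ and vanishes for $|k|>n$), the inductive step is thus reduced to the purely algebraic identity
\begin{gather*}
  (2p+2k+1)(2p-2k+1)\,F_n(p,k)=F_{n-1}(p+1,k)+F_{n-1}(p-1,k)-F_{n-1}(p,k+1)-F_{n-1}(p,k-1).
\end{gather*}

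Proving this identity is the main — but routine — computational step. Writing $F_n(p,k)=G_n(p)H_n(k)$ with $G_n(p)=\frac1{2p+1}\prod_{i=1}^n\frac{2i-1}{(2p+1)^2-4i^2}$ and $H_n(k)=(-1)^k\frac{2^{n+1}n!}{(n+k)!(n-k)!}$, and using $G_n(p)=\frac{2n-1}{(2p+1)^2-4n^2}G_{n-1}(p)$ together with the elementary ratios $G_{n-1}(p\pm1)/G_{n-1}(p)$ and $H_{n-1}(k\pm1)/H_{n-1}(k)$, the identity collapses to a polynomial identity in $(2p+1)^2$, $k^2$ and $n$ which one checks by expansion (the case $k=n$, where $H_{n-1}(n)=0$, is handled by observing that then three of the four terms on the right vanish and comparing the remaining ones). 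Finally, the degenerate case $p=k=n$ must be done by hand: applying \eqref{eq:Lambda_psi} to $\psi_{2n,n}$ only, the summand containing $\psi_{2n+1,n-1}$ drops out by the span/degree argument of the first paragraph (its Legendre support begins at $L_{n+2}$, whereas $\deg\psi_{0,n}=n$), leaving $\scal{\psi_{2n,n}}{\psi_{0,n}}=-\tfrac1{4n+1}\scal{\psi_{2n-1,n-1}}{\psi_{0,n}}$; since $\psi_{0,n}(x)=(x+1)^n/n!$, integrating by parts $n$ times (all boundary terms vanish because $\psi_{2n-1,m}(\pm1)=0$ for $1\le m\le 2n-1$) gives $(-1)^n\int_{-1}^1\psi_{2n-1,2n-1}\,\D x$, i.e.\ $(-1)^n$ times twice the $L_0$-coefficient of $\psi_{2n-1,2n-1}$, and from \eqref{eq:Lambda_psi} that coefficient equals $(-1)^{2n-1}/\prod_{j=0}^{2n-2}(4n-1-2j)=-1/(4n-1)!!$; this matches $F_n(n,n)$.

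I expect the main obstacle to be exactly the algebraic identity above — in particular keeping track of which of the four level-$(n-1)$ terms are genuine instances of the formula and which have collapsed to $0$ through the range restriction — rather than anything conceptual. An alternative would be to integrate by parts $n$ times from the outset, reducing $\scal{\psi_{p+k,n}}{\psi_{p-k,n}}$ to $(-1)^n\scal{\psi_{p+k,2n}}{L_{p-k}}$, i.e.\ to a single Legendre coefficient of a primitive, and then to evaluate that coefficient via the correspondence $\psi_{i,n}\propto P_{i+n}^{(-n,-n)}$ between primitives of \textsc{Legendre} polynomials and \textsc{Jacobi} polynomials; but the low-degree primitives ($i<n$) need extra care on that route, so the induction above seems more self-contained.
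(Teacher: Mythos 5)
Your proposal follows essentially the same route as the paper's own proof in the appendix: induction on $n$ via the recurrence \eqref{eq:Lambda_psi}, expanding $\scal{\psi_{p+k,n}}{\psi_{p-k,n}}$ into the four level-$(n-1)$ inner products and verifying the resulting four-term algebraic identity for the claimed closed form (your ``routine expansion'' is precisely the tedious computation the paper carries out in its case distinction $k>m$, $k\le m-2$, $k\in\{m-1,m\}$). Your only deviations are minor but sound refinements: the direct degree/orthogonality argument for $k\ge n+1$, and the separate hand treatment of the corner case $p=k=n$ (where the recurrence cannot be applied to $\psi_{0,n}$), a point the paper passes over silently.
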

The proof by induction is rather elementary but lengthy and involves
some tedious calculations.
It is therefore deferred to~\autoref{app:psi}.

Expressions for the $L^2$-norms of the integrated Legendre polynomials are
obtained as the special case \mbox{$k=0$} from~\autoref{prop:leg:scal} because
$\norm{\psi_{p,n}}_0^2 = \scal{\psi_{p,n}}{\psi_{p,n}}$.
We have not been able to find these results in the literature.
\begin{lemma}[$L^2$-norms of the integrated Legendre polynomials]
  \label{lem:L2norm_psi}
  Let $\psi_{p,n}$ be the $n$-th primitive of the Legendre polynomial $L_p$,
  as defined by \eqref{eq:primitives}.
  Then, for $p=n,n+1,\dots, \ n\in\NN_0$,
  \begin{gather*}
    \norm{\psi_{p,n}}_0^2
       = \frac{2^{n+1}}{n!} \, \frac{1}{2p+1}
           \prod_{k=1}^n \frac{2k-1}{(2p+1)^2-4k^2}\, .
  \end{gather*}
\end{lemma}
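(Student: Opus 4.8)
The plan is to read off the claim as the special case $k=0$ of \autoref{prop:leg:scal}. Indeed, $\norm{\psi_{p,n}}_0^2 = \scal{\psi_{p,n}}{\psi_{p,n}}$, so with $k=0$ we are looking at $\scal{\psi_{p+0,n}}{\psi_{p-0,n}}$, which falls into the first branch of the case distinction there (the admissible range being $k=0,1,\dots,n$). Substituting $k=0$ gives $(-1)^k = 1$ and $(n+k)!\,(n-k)! = (n!)^2$, so the prefactor $\dfrac{2^{n+1}\,n!}{(n+k)!\,(n-k)!}$ simplifies to $\dfrac{2^{n+1}}{n!}$, while the product over $i=1,\dots,n$ is untouched. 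This is exactly the asserted formula, and the range $p=n,n+1,\dots$ is inherited verbatim from \autoref{prop:leg:scal}.

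Consequently there is no real obstacle here: the entire substance sits in \autoref{prop:leg:scal}, whose elementary but lengthy inductive proof is relegated to \autoref{app:psi}. The only things to verify at this point are that $k=0$ is an admissible index and that the binomial-type prefactor collapses correctly; both are immediate.

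For completeness one may record the sanity check $n=0$: then the product $\prod_{k=1}^{0}$ is empty and equals $1$, and $2^{n+1}/n! = 2$, so the formula returns $\norm{\psi_{p,0}}_0^2 = \norm{L_p}_0^2 = 2/(2p+1)$, as it must. If one wished to avoid invoking the full proposition, one could instead derive the $k=0$ case directly by integrating by parts $n$ times in $\int_{-1}^{1}\psi_{p,n}^2$ — legitimate because $\psi_{p,n}^{(\nu)}(\pm1)=0$ for $\nu=0,\dots,n-1$ — thereby reducing the integral to one involving $L_p=\psi_{p,n}^{(n)}$, and then using $\psi_{p,n}\in\Lambda_{p-n}^{p+n}$ together with a short induction on $n$; but this merely reproduces the appendix argument restricted to $k=0$, so routing through \autoref{prop:leg:scal} is preferable.
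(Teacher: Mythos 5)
Your proposal is correct and is exactly the paper's route: the lemma is obtained by setting $k=0$ in \autoref{prop:leg:scal}, using $\norm{\psi_{p,n}}_0^2 = \scal{\psi_{p,n}}{\psi_{p,n}}$ and the collapse of the prefactor to $2^{n+1}/n!$. Your added $n=0$ sanity check and the sketched direct alternative are fine but not needed.
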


The next lemma tells us that the $\nu^{th}$ order derivatives of
the $L^2$ projection error at the endpoints,
are bounded by the $L^2$ norms of certain polynomials,
times the $L^2$ norm of the $(\nu+1)^{st}$ derivative of the function.

\begin{lemma}
  \label{lem:point-Hr}
  Assume \mbox{$p,\nu\in\NN_0$}, \mbox{$\nu\le p$}
  and \mbox{$u\in H^{\nu+1}(\Omega)$}.
  Let \mbox{$q_{p,\nu} \in \Lambda_{p-\nu}^{p+\nu+1}$} be
  such that
  \begin{gather*}
    q_{p,\nu}(1)= 1, \ \ q_{p,\nu}(-1)=0 \quad\text{and}\quad
    q_{p,\nu}^{(i)}(\pm 1)=0, \ \ i=1,\dots,\nu.
  \end{gather*}
  Then
  \begin{gather*}
     \abs{\left(u-\pi_p u\right)^{(\nu)}(\pm 1)}
        \le \norm{q_{p,\nu}}_0 \abs{u}_{\nu+1}.
  \end{gather*}
\end{lemma}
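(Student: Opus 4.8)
The plan is to exploit the fact that $q_{p,\nu}$ lies in $\Lambda_{p-\nu}^{p+\nu+1}$ together with the Legendre expansion of the projection error. Write $u-\pi_p u = \sum_{i=p+1}^\infty b_i L_i$ with $b_i = \tfrac{2i+1}{2}\scal{u}{L_i}$. The key observation is that, because $q_{p,\nu}$ has vanishing value and first $\nu$ derivatives at both endpoints except for the single normalization $q_{p,\nu}(1)=1$, integrating $q_{p,\nu}^{(\nu+1)}$ against $u-\pi_p u$ will reproduce the boundary term $(u-\pi_p u)^{(\nu)}(1)$ after $\nu+1$ integrations by parts. Concretely, I would compute $\scal{(u-\pi_p u)^{(\nu)}}{q_{p,\nu}^{(\nu+1)}}$ — or rather integrate by parts starting from $\int_{-1}^1 (u-\pi_p u) \, q_{p,\nu}^{(\nu+1)}$ — so that all interior boundary contributions cancel by the hypotheses on $q_{p,\nu}$, leaving exactly $\pm(u-\pi_p u)^{(\nu)}(\pm1)$ plus $(-1)^{\nu+1}\scal{(u-\pi_p u)^{(\nu+1)}}{q_{p,\nu}}$; but $(u-\pi_p u)^{(\nu+1)}$ differs from $u^{(\nu+1)}$ only by a polynomial, and I will arrange the integration so the polynomial part drops.

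The cleaner route is the following. Since $q_{p,\nu}\in\Lambda_{p-\nu}^{p+\nu+1}$, its $(\nu+1)$-st antiderivative-type structure means $q_{p,\nu}^{(\nu+1)}\in\Lambda_{p-\nu-(\nu+1)}^{p+\nu+1-(\nu+1)}$-type span — more precisely, by the primitive relations \eqref{eq:Lambda_psi}, we can write $q_{p,\nu} = \sum_{i=p-\nu}^{p+\nu+1} c_i L_i$ and build its derivatives. What I actually want is to express $(u-\pi_p u)^{(\nu)}(1)$ as an $L^2$ inner product against a function orthogonal to $\PP_p$. Start from
\begin{gather*}
  (u-\pi_p u)^{(\nu)}(1) = \int_{-1}^1 \frac{\D}{\D x}\Bigl[ q_{p,\nu}(x)\, (u-\pi_p u)^{(\nu)}(x)\Bigr]\D x,
\end{gather*}
using $q_{p,\nu}(1)=1$, $q_{p,\nu}(-1)=0$; then integrate by parts repeatedly, each time moving a derivative off $q_{p,\nu}$ and onto $(u-\pi_p u)^{(\nu)}$, with all boundary terms vanishing because $q_{p,\nu}^{(i)}(\pm1)=0$ for $1\le i\le\nu$ and $(u-\pi_p u)^{(j)}$ — no, the error derivatives need not vanish at the endpoints, so I must instead peel derivatives the other way: integrate by parts to lower the order on the error term, so after $\nu$ steps I reach $\pm\scal{\text{something}}{(u-\pi_p u)}$, but this requires $q_{p,\nu}^{(j)}$ boundary data which we have. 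Carrying this out gives
\begin{gather*}
  (u-\pi_p u)^{(\nu)}(1) = (-1)^{\nu}\int_{-1}^1 q_{p,\nu}^{(\nu+1)}(x)\,(u-\pi_p u)(x)\D x
\end{gather*}
up to a sign, since every intermediate boundary term carries a factor $q_{p,\nu}^{(i)}(\pm1)=0$. Now crucially $q_{p,\nu}^{(\nu+1)}$ is a polynomial of degree $\le p$, and $u-\pi_p u\perp \PP_p$, so this integral is zero — which is wrong. So the correct bookkeeping must keep one derivative on $u-\pi_p u$: the identity should read, after $\nu$ integrations by parts,
\begin{gather*}
  (u-\pi_p u)^{(\nu)}(1) = (-1)^{\nu}\int_{-1}^1 q_{p,\nu}^{(\nu)}(x)\,(u-\pi_p u)'(x)\D x.
\end{gather*}
Then integrate by parts once more, moving the derivative onto $q_{p,\nu}^{(\nu)}$: the boundary term is $q_{p,\nu}^{(\nu)}(\pm1)(u-\pi_p u)(\pm1)$, which need not vanish — hmm. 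I think the right statement keeps the derivative on the error throughout and uses instead that $q_{p,\nu}^{(\nu+1)}$, while degree $\le p$... Let me restate: the correct endpoint identity is
\begin{gather*}
  (u-\pi_p u)^{(\nu)}(1)
    = (-1)^{\nu}\scal{(u-\pi_p u)'}{\,q_{p,\nu}^{(\nu)}}
    \quad\text{(after }\nu\text{ integrations by parts, all interior BTs killed).}
\end{gather*}
Since $(u-\pi_p u)' = u' - (\pi_p u)'$ and $(\pi_p u)'\in\PP_{p-1}$, while $q_{p,\nu}^{(\nu)}\in\Lambda_{p-2\nu}^{p+1}$ has a component in $\PP_{p-1}$ that may not be orthogonal — so I instead apply Cauchy–Schwarz directly: $\abs{(u-\pi_p u)^{(\nu)}(1)}\le \norm{(u-\pi_p u)'}_0\norm{q_{p,\nu}^{(\nu)}}_0$, and bound $\norm{(u-\pi_p u)'}_0\le\norm{u'}_0$... but that gives $\abs{u}_1$, not $\abs{u}_{\nu+1}$, and the wrong polynomial.

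Given the stated conclusion — $\abs{(u-\pi_p u)^{(\nu)}(\pm1)}\le\norm{q_{p,\nu}}_0\,\abs{u}_{\nu+1}$ — the identity I should aim for is
\begin{gather*}
  (u-\pi_p u)^{(\nu)}(1) = (-1)^{\nu+1}\scal{u^{(\nu+1)}}{\,q_{p,\nu}},
\end{gather*}
obtained by $\nu+1$ integrations by parts: start from $(u-\pi_p u)^{(\nu)}(1)=\int_{-1}^1[q_{p,\nu}(u-\pi_p u)^{(\nu)}]'\D x$, integrate by parts $\nu+1$ times moving derivatives onto $q_{p,\nu}$; all interior boundary terms vanish because they carry a factor $q_{p,\nu}^{(i)}(\pm1)=0$ for $1\le i\le\nu$ or a factor $q_{p,\nu}(\pm1)$ paired with $(u-\pi_p u)^{(\nu+j)}$ for $j\ge1$ — wait, those don't vanish either. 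The resolution must be that the contribution of $(\pi_p u)^{(\nu)}$ to the final inner product $\scal{(u-\pi_p u)^{(\nu+1)}}{q_{p,\nu}}$ is $\scal{(\pi_p u)^{(\nu+1)}}{q_{p,\nu}}$, and $(\pi_p u)^{(\nu+1)}\in\PP_{p-\nu-1}$ while $q_{p,\nu}\in\Lambda_{p-\nu}^{p+\nu+1}$ is orthogonal to $\PP_{p-\nu-1}$ — that is exactly the cancellation. So: (i) establish $(u-\pi_p u)^{(\nu)}(1)=(-1)^{\nu+1}\scal{(u-\pi_p u)^{(\nu+1)}}{q_{p,\nu}}$ by repeated integration by parts, the boundary terms either killed by $q_{p,\nu}^{(i)}(\pm1)=0$ or — for the $\pi_p u$ piece — by polynomial degree; (ii) replace $(u-\pi_p u)^{(\nu+1)}$ by $u^{(\nu+1)}$ using $\scal{(\pi_p u)^{(\nu+1)}}{q_{p,\nu}}=0$ since $q_{p,\nu}\perp\PP_{p-\nu-1}\supseteq\PP_{p-\nu-1}\ni(\pi_p u)^{(\nu+1)}$; (iii) apply Cauchy–Schwarz: $\abs{(u-\pi_p u)^{(\nu)}(1)}\le\norm{u^{(\nu+1)}}_0\norm{q_{p,\nu}}_0=\abs{u}_{\nu+1}\norm{q_{p,\nu}}_0$; (iv) repeat verbatim at $x=-1$ using $q_{p,\nu}(-1)=0$ (here the fundamental-theorem step integrates from $x$ to $1$ or uses $q_{p,\nu}(-1)=0$ to get the sign right).

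The main obstacle — and the step that needs care — is the bookkeeping of boundary terms in the $\nu+1$ successive integrations by parts in (i): one must check that at every stage the boundary contribution splits into a piece with a factor $q_{p,\nu}^{(i)}(\pm1)$, $0\le i\le\nu$, which vanishes except when $i=0$ at $x=1$ (contributing the desired $(u-\pi_p u)^{(\nu)}(1)$) and $i=0$ at $x=-1$ (which vanishes since $q_{p,\nu}(-1)=0$), and that the surviving interior terms organize into a telescoping sum that collapses correctly. Equivalently, one can bypass the explicit integration by parts by noting $q_{p,\nu}$ agrees with the $\nu$-th primitive (in the sense of \eqref{eq:primitives}) of some $g\in\PP_{p-\nu}^{\perp}$-type element plus a correction, but the direct approach is cleanest. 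The orthogonality $q_{p,\nu}\perp\PP_{p-\nu-1}$ in step (ii) is immediate from $q_{p,\nu}\in\Lambda_{p-\nu}^{p+\nu+1}=\Span\{L_{p-\nu},\dots,L_{p+\nu+1}\}$ and the orthogonality of the Legendre basis.
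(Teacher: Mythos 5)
Your eventual plan --- steps (i)--(iv) at the end of the proposal --- is in substance the paper's own proof: represent $(u-\pi_p u)^{(\nu)}(1)$ as $\pm\scal{(u-\pi_p u)^{(\nu+1)}}{q_{p,\nu}}$ by integration by parts, replace $(u-\pi_p u)^{(\nu+1)}$ by $u^{(\nu+1)}$ using that $(\pi_p u)^{(\nu+1)}\in\PP_{p-\nu-1}$ is orthogonal to $q_{p,\nu}\in\Span\{L_{p-\nu},\dots,L_{p+\nu+1}\}$, and finish with Cauchy--Schwarz. But as written there are two genuine gaps. First, the central identity in your step (i) is asserted rather than established --- you yourself flag its bookkeeping as ``the step that needs care'' --- and the mechanism you offer (``boundary terms \dots killed \dots for the $\pi_p u$ piece by polynomial degree'') is not the right one: no boundary term is killed by a degree argument. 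The clean route, which you actually had in hand and then discarded as ``wrong'', is to start from $0=\scal{u-\pi_p u}{q_{p,\nu}^{(\nu+1)}}$, legitimate because $q_{p,\nu}$ has degree at most $p+\nu+1$, so $q_{p,\nu}^{(\nu+1)}\in\PP_p$ and the projection error is orthogonal to $\PP_p$. Integrating by parts $\nu+1$ times, every boundary term carries a factor $q_{p,\nu}^{(j)}(\pm1)$ with $1\le j\le\nu$ except the last, which equals $\pm\bigl[(u-\pi_p u)^{(\nu)}(1)\,q_{p,\nu}(1)-(u-\pi_p u)^{(\nu)}(-1)\,q_{p,\nu}(-1)\bigr]=\pm(u-\pi_p u)^{(\nu)}(1)$, and the single surviving volume term is $\pm\scal{(u-\pi_p u)^{(\nu+1)}}{q_{p,\nu}}$, which your step (ii) then converts correctly. (Equivalently, in your fundamental-theorem version the term $\int_{-1}^1 q_{p,\nu}'\,(u-\pi_p u)^{(\nu)}$ vanishes after $\nu$ further integrations by parts for exactly this reason.) Your sign $(-1)^{\nu+1}$ is off, but that is immaterial once absolute values are taken.

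Second, step (iv) cannot be done ``verbatim'': with the same $q_{p,\nu}$ the boundary contribution at $-1$ always appears multiplied by $q_{p,\nu}(-1)=0$, so the argument only ever produces the value at $+1$. To treat the endpoint $-1$ you need a second auxiliary polynomial adapted to it, for instance $\tilde q(x)\coloneqq q_{p,\nu}(-x)$: by the parity of the Legendre polynomials $\tilde q\in\Lambda_{p-\nu}^{p+\nu+1}$, it satisfies $\tilde q(-1)=1$, $\tilde q(1)=0$ and $\tilde q^{(i)}(\pm1)=0$ for $i=1,\dots,\nu$, and $\norm{\tilde q}_0=\norm{q_{p,\nu}}_0$, so the same argument run with $\tilde q$ yields the bound at $-1$ with the same constant (the paper instead suggests working with $1-q_{p,\nu}$). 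With these two repairs your outline coincides with the paper's proof.
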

\begin{proof}
  Clearly, \mbox{$q_{p,\nu} \in \Lambda_{p-\nu}^{p+\nu+1}$} implies
  \mbox{$q_{p,\nu}^{(\nu+1)} \in \PP_p$}, and therefore --
  by the definition of the $L^2$-projection --
  \begin{alignat*}{2}
     0 & = \scal{u-\pi_p u}{q_{p,\nu}^{(\nu+1)}} \\
       & = \sum_{i=0}^{\nu} (-1)^i\left. \left(u-\pi_p u\right)^{(i)} q_{p,\nu}^{(\nu-i)} \right|_{-1}^1
               + (-1)^{\nu+1} \int_{-1}^1 \left(u-\pi_p u\right)^{(\nu+1)} q_{p,\nu}\
       &\quad& \text{(integration by parts)} \\
       & = \left(u-\pi_p u\right)^{(\nu)}(1)
               - (-1)^\nu \int_{-1}^1 \left(u-\pi_p u\right)^{(\nu+1))} q_{p,\nu}
       && \text{(values of $q_{p,\nu}^{(i)}(\pm 1)$)} \\
       & = \left(u-\pi_p u\right)^{(\nu)}(1)
               - (-1)^\nu \int_{-1}^1 u^{(\nu+1)} q_{p,\nu}\, .
  \end{alignat*}
  The latter follows because \mbox{$\left(\pi_p u\right)^{(\nu+1)}\in \PP_{p-\nu-1}$}
  is perpendicular to \mbox{$\Lambda_{p-\nu}^{p+\nu+1}$}.
  Thus,
  \begin{gather*}
    \left(u-\pi_p u\right)^{(\nu)}(1)
       = (-1)^\nu \int_{-1}^1 u^{(\nu+1)} q_{p,\nu}\, .
  \end{gather*}
  A similar representation is obtained for $\left(u-\pi_p u\right)^{(\nu)}(-1)$,
  using $1-q_{p,\nu}$ instead of $q_{p,\nu}$.
The assertion of the lemma follows using the
  \textsc{Cauchy-Schwarz} inequality.
\end{proof}

\begin{remark}
  The estimate is sharp.
  For example, it holds with equality for any function $u$ with
  \mbox{$u^{(\nu+1)} = q_{p,\nu}$}.
\end{remark}

It remains to demonstrate that polynomials $q_{p,\nu}$ with the above
properties indeed exist.
They can be constructed recursively.
We start with $\nu=0$:
\begin{gather*}
   q_{p,0} \coloneqq \frac{L_p + L_{p+1}}{2} \in \Lambda_{p}^{p+1},
\end{gather*}
which satisfies $q_{p,0}(1)=1$ and $q_{p,0}(-1)=0$, cf.~\eqref{leg:prim:ne0}.
The $L^2$ norm of $q_{p,0}$ can be computed using properties
of the \textsc{Legendre} polynomials, yielding
\begin{gather*}
   \abs{\left(u-\pi_p u\right)(\pm 1)}^2
      \le \frac{2(p+1)}{(2p+1)(2p+3)} \abs{u}_{1}^2.
\end{gather*}
Since \mbox{$2(p+1)/(2p+3)<1$} for all \mbox{$p\in\NN_0$},
this result implies
\begin{gather*}
   \abs{\left(u-\pi_p u\right)(\pm 1)}^2
      \le \frac{1}{2p+1} \abs{u}_{1}^2,
\end{gather*}
which was established in \cite[ineq. following eq. (3.24)]{MR1897953} as an
auxilliary result.
The proof in that paper is based on matching coefficients of the \textsc{Legendre}
expansions of both $u$ and of $u'$. Our approach avoids this cumbersome step and also
allows for a generalization.

For \mbox{$\nu \ge 1$}, the polynomials \mbox{$q_{p,\nu}$} can be constructed
recursively as follows: 
assume \mbox{$q_{p,\nu}\in\Lambda_{p-\nu}^{p+\nu+1}$} satisfies the assumptions
of \autoref{lem:point-Hr}.
Then, we define $q_{p,\nu+1}$ by
\begin{gather}\label{eq:q_p}
  q_{p,\nu+1} = q_{p,\nu} + \alpha_{p,\nu+1} \psi_{p,\nu+1}
                          + \beta_{p,\nu+1} \psi_{p+1,\nu+1},
\end{gather}
with $\alpha_{p,\nu+1} ,  \beta_{p,\nu+1} \in \mathbb{R}$. Since $\psi_{p,\nu+1}\in \Lambda_{p-\nu-1}^{p+\nu+1}$ and $\psi_{p+1,\nu+1}\in \Lambda_{p-\nu}^{p+\nu+2}$, we have
$q_{p,\nu+1} \in \Lambda_{p-\nu-1}^{p+\nu+2}$.
Furthermore,
\begin{gather*}
  \psi_{p,\nu+1}^{(i)}(\pm 1) = \psi_{p+1,\nu+1}^{(i)}(\pm 1) = 0,
    \ \ \ i=0,\dots,\nu.
\end{gather*}
Hence, for arbitrary $\alpha_{p,\nu+1}$ and $\beta_{p,\nu+1}$ there holds
\begin{gather*}
  q_{p,\nu+1}(1)= 1, \ \ q_{p,\nu+1}(-1)=0 \quad\text{and}\quad
  q_{p,\nu+1}^{(i)}(\pm 1)=0, \ \ i=1,\dots,\nu.
\end{gather*}
Next, we determine $\alpha_{p,\nu+1}$ and $\beta_{p,\nu+1}$ such that
\mbox{$q_{p,\nu+1}^{(\nu+1)}(\pm 1)=0$}.
With~\eqref{leg:prim:ne0} we obtain a system of two equations:
\begin{gather*}
  \alpha_{p,\nu+1} + \beta_{p,\nu+1} = - q_{p,\nu}^{(\nu+1)}(1)
  \quad\text{and}\quad
  \alpha_{p,\nu+1} - \beta_{p,\nu+1} = - (-1)^p q_{p,\nu}^{(\nu)}(-1)\,.
\end{gather*}
Its solution is given by
\begin{gather*}
  \alpha_{p,\nu+1} = - \frac{q_{p,\nu}^{(\nu+1)}(1) + (-1)^p q_{p,\nu}^{(\nu+1)}(-1)}{2}\,,
  \quad
  \beta_{p,\nu+1}  = - \frac{q_{p,\nu}^{(\nu+1)}(1) - (-1)^p q_{p,\nu}^{(\nu+1)}(-1)}{2}\,.
\end{gather*}

\textbf{Example.} For illustration, we compute $q_1$ from $q_0$.
We have
\mbox{$q_{p,0} = \left(L_p + L_{p+1}\right)/2
           = \left(\psi_{p,0} + \psi_{p+1,0}\right)/2$}.
Hence
\begin{align*}
   q_{p,0}'(\pm1)  & = \frac{(\pm1)^p}{4}
            \left[\frac{(p+2)!}{p!} \pm \frac{(p+1)!}{(p-1)!}  \right] .
\end{align*}
Moreover,
\begin{gather*}
   \alpha_{p,1} = - \frac{1}{4} \frac{(p+2)!}{p!}\,, \qquad
   \beta_{p,1}  = - \frac{1}{4} \frac{(p+1)!}{(p-1)!} ,
\end{gather*}
and we obtain
\begin{gather*}
  q_{p,1} = -\frac{(p+2)(p+1)}{4(2p+1)} L_{p-1}
          + \left(\frac{1}{2} + \frac{(p+1)p}{4(2p+3)} \right) L_p
          + \left(\frac{1}{2} - \frac{(p+2)(p+1)}{4(2p+1)} \right) L_{p+1}
          - \frac{(p+1)p}{4(2p+3)} L_{p+2}.
\end{gather*}
Using the orthogonality of the $L_i$, we find
\begin{gather*}
  \norm{q_{p,1}}_0^2 = \frac{p(p+1)(p+2)(p^2 + 2p +10)}
                            {(2p-1)(2p+1)(2p+3)(2p+5)}.
\end{gather*}

The following lemma gives a formula for computing $q_{p,\nu-1}^{(\nu)}(\pm1)$, for any $\nu \ge 1$.
\begin{lemma}\label{lem:q_nu}
  There holds for $\nu \ge 1$,
  \begin{gather*}
    q_{p,\nu-1}^{(\nu)} (\pm 1)
         = \frac{\left(\pm 1\right)^p}{(-2)^{\nu+1} \cdot \nu!}
               \left[\frac{(p+1+\nu)!}{(p+1-\nu)!} \pm \frac{(p+\nu)!}{(p-\nu)!}
               \right],
  \end{gather*}
  and
  \begin{gather*}
    \alpha_{p,\nu} = \frac{- 1}{(-2)^{\nu+1} \cdot \nu!}
                     \frac{(p+1+\nu)!}{(p+1-\nu)!}\,,
    \quad
    \beta_{p,\nu}  = \frac{- 1}{(-2)^{\nu+1} \cdot \nu!}
                     \frac{(p+\nu)!}{(p-\nu)!}.
  \end{gather*}
\end{lemma}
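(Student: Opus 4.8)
The plan is to prove \autoref{lem:q_nu} by induction on $\nu\ge 1$, using the recursive construction \eqref{eq:q_p} together with the explicit endpoint formula \eqref{leg:prim:ne0} for the derivatives of the primitives $\psi_{p,n}$. For the base case $\nu=1$ I would simply quote the \textbf{Example} worked out above: from $q_{p,0}=(\psi_{p,0}+\psi_{p+1,0})/2$ and \eqref{leg:prim:ne0} one gets
\begin{gather*}
  q_{p,0}'(\pm1)=\frac{(\pm1)^p}{4}\left[\frac{(p+2)!}{p!}\pm\frac{(p+1)!}{(p-1)!}\right]
               =\frac{(\pm1)^p}{(-2)^{2}\cdot 1!}\left[\frac{(p+2)!}{p!}\pm\frac{(p+1)!}{(p-1)!}\right],
\end{gather*}
which is exactly the claimed expression for $q_{p,\nu-1}^{(\nu)}(\pm1)$ at $\nu=1$; the formulas for $\alpha_{p,1},\beta_{p,1}$ then drop out of the general solution formula for $\alpha_{p,\nu+1},\beta_{p,\nu+1}$ displayed before the Example, or are read off directly.

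For the inductive step, assume the formula for $q_{p,\nu-1}^{(\nu)}(\pm1)$ holds; I want the formula for $q_{p,\nu}^{(\nu+1)}(\pm1)$. Differentiating \eqref{eq:q_p} (with $\nu$ in place of $\nu+1$) $\nu+1$ times and evaluating at $\pm1$ gives
\begin{gather*}
  q_{p,\nu}^{(\nu+1)}(\pm1)=q_{p,\nu-1}^{(\nu+1)}(\pm1)
      +\alpha_{p,\nu}\,\psi_{p,\nu}^{(\nu+1)}(\pm1)
      +\beta_{p,\nu}\,\psi_{p+1,\nu}^{(\nu+1)}(\pm1).
\end{gather*}
The two $\psi$-terms are evaluated by \eqref{leg:prim:ne0}: $\psi_{p,\nu}^{(\nu+1)}(\pm1)=L_p^{(1)}(\pm1)=(\pm1)^{p+1}\tfrac12 p(p+1)$ and similarly $\psi_{p+1,\nu}^{(\nu+1)}(\pm1)=(\pm1)^{p}\tfrac12 (p+1)(p+2)$. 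The term $q_{p,\nu-1}^{(\nu+1)}(\pm1)$ is the $(\nu+1)$-st derivative of $q_{p,\nu-1}$, one order higher than what the induction hypothesis directly provides; so I will also need a companion formula for $q_{p,\nu-1}^{(\nu+1)}(\pm1)$, which should itself be obtained by carrying an extra derivative through the same recursion, or more cleanly, I would strengthen the induction hypothesis to record $q_{p,m}^{(j)}(\pm1)$ for all $j\ge m+1$ (not just $j=m+1$), since $q_{p,m}\in\Lambda_{p-m}^{p+m+1}$ and its higher endpoint derivatives are again governed by \eqref{leg:prim:ne0} via the recursion. Once all three pieces are assembled, the identity for $\alpha_{p,\nu},\beta_{p,\nu}$ follows by plugging the (just-proved) formula for $q_{p,\nu-1}^{(\nu)}(\pm1)$ into the displayed solution formulas
\begin{gather*}
  \alpha_{p,\nu}=-\frac{q_{p,\nu-1}^{(\nu)}(1)+(-1)^p q_{p,\nu-1}^{(\nu)}(-1)}{2},\qquad
  \beta_{p,\nu}=-\frac{q_{p,\nu-1}^{(\nu)}(1)-(-1)^p q_{p,\nu-1}^{(\nu)}(-1)}{2},
\end{gather*}
and observing that the $+$ combination kills the $\pm\tfrac{(p+\nu)!}{(p-\nu)!}$ summand while the $-$ combination kills the $\tfrac{(p+1+\nu)!}{(p+1-\nu)!}$ summand, each up to the factor $\tfrac{(\pm1)^p}{(-2)^{\nu+1}\nu!}$, yielding the stated $\alpha_{p,\nu},\beta_{p,\nu}$.

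The main obstacle will be the bookkeeping in the inductive step: the recursion \eqref{eq:q_p} raises the index $\nu$ but the quantity $q_{p,\nu}^{(\nu+1)}(\pm1)$ involves one derivative \emph{beyond} the "diagonal" order, so a naive induction on the single statement is not self-sustaining. The clean fix is to prove, by induction on $\nu$, the combined claim that for every $p\ge\nu$ the endpoint values $q_{p,\nu}^{(\nu)}(\pm1)=\delta_{\pm,+}$ (trivially, by the defining properties) \emph{and} $q_{p,\nu}^{(\nu+1)}(\pm1)$ is given by the displayed formula with $\nu$ replaced by $\nu+1$; feeding this into the $\alpha,\beta$ solution formulas closes the loop. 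Beyond that, the computation is just repeated use of \eqref{leg:prim:ne0} and simplification of ratios of factorials, with the sign $(-2)^{\nu+1}$ arising from the product of the $\tfrac12$ from $\psi^{(\nu+1)}(\pm1)=L_p'(\pm1)$ and the $-1$ from the definitions of $\alpha_{p,\nu},\beta_{p,\nu}$, combined with the inductive factor $(-2)^{\nu}$; I would double-check these signs carefully since that is where errors are easiest to make.
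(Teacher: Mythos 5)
You have correctly identified the structure (induction on $\nu$, base case $\nu=1$ from the Example, and the passage from the endpoint values $q_{p,\nu-1}^{(\nu)}(\pm1)$ to $\alpha_{p,\nu},\beta_{p,\nu}$ via the displayed solution formulas is indeed a routine parity computation), and you have also correctly spotted the obstacle: the recursion \eqref{eq:q_p} forces you to know $q_{p,\nu-1}^{(\nu+1)}(\pm1)$, one order beyond the diagonal, so the single-statement induction does not close. But your proposed repair is not actually carried out, and this is where the real content of the lemma lives. You suggest strengthening the hypothesis to ``$q_{p,m}^{(j)}(\pm1)$ for all $j\ge m+1$'', yet you never write down what that strengthened formula should be, let alone verify that it propagates through \eqref{eq:q_p}; your final ``combined claim'' (tracking only $q_{p,\nu}^{(\nu)}$ and $q_{p,\nu}^{(\nu+1)}$ at $\pm1$) is still not self-sustaining, because computing $q_{p,\nu}^{(\nu+1)}(\pm1)$ from the recursion again calls for $q_{p,\nu-1}^{(\nu+1)}(\pm1)$, i.e.\ the order-$(m+2)$ derivative of $q_{p,m}$ with $m=\nu-1$ — exactly the cascade you flagged.

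The paper closes the induction differently: it unrolls the recursion completely,
\begin{gather*}
  q_{p,\nu}=q_{p,0}+\sum_{k=1}^{\nu}\bigl\{\alpha_{p,k}\psi_{p,k}+\beta_{p,k}\psi_{p+1,k}\bigr\},
\end{gather*}
so that only $q_{p,0}^{(\nu+1)}(\pm1)$ and $\psi_{p,k}^{(\nu+1)}(\pm1)$, $\psi_{p+1,k}^{(\nu+1)}(\pm1)$ appear — all explicit via \eqref{leg:prim:ne0} — together with the inductively known $\alpha_{p,k},\beta_{p,k}$ for $k\le\nu$. The price is the factorial sum $S$ in \eqref{eq:S}, and its closed-form evaluation \eqref{eq:claim} (which collapses by the parity of $\nu$) is the crux of the proof; the paper establishes it via a WZ-pair argument in the sense of Wilf--Zeilberger (or symbolic computation). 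Your remark that the rest is ``just repeated use of \eqref{leg:prim:ne0} and simplification of ratios of factorials'' understates precisely this step: without the hypergeometric identity \eqref{eq:claim} (or an equivalent closed formula for the higher endpoint derivatives in your strengthened hypothesis), the inductive step does not go through, so as written the proposal has a genuine gap.
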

\begin{proof}
  The proof is by induction on $\nu$.
  For $\nu=1$, we have shown the result by direct calculation, above.
  So we assume it holds for $\nu$ and we will show it for $\nu+1$.
  We will only consider one of the endpoints, since the other is analogous.
  By definition (see eq. \eqref{eq:q_p}),
  \begin{gather*}
    q_{p,\nu} = q_{p,\nu-1} + \alpha_{p,\nu} \psi_{p,\nu} + \beta_{p,\nu} \psi_{p+1,\nu} =
    q_{p,0} + \sum_{k=1}^{\nu}  \left\{ \alpha_{p,k} \psi_{p,k} + \beta_{p,k} \psi_{p+1,k} \right \},
  \end{gather*}
  hence
  \begin{gather*}
    q^{(\nu+1)}_{p,\nu}(1) = q^{(\nu+1)}_{p,0}(1)
       + \sum_{k=1}^{\nu}  \left\{ \alpha_{p,k} \psi^{(\nu+1)}_{p,k}(1)
       + \beta_{p,k} \psi^{(\nu+1)}_{p+1,k}(1)  \right \}.
  \end{gather*}
  We calculate
  \begin{align}
    q^{(\nu+1)}_{p,0}(1) = \frac{L_p^{(\nu+1)} (1)+ L_{p+1}^{(\nu+1)}(1)}{2} 
      =
    \frac{1}{2^{\nu+2}(\nu+1)!} \left[ \frac{(p+\nu+1)!}{(p-(\nu+1))!}  + 
    \frac{(p+\nu+2)!}{(p-\nu)!} \right]
      = \frac{(p+1)(p+1+\nu)!}{2^{\nu+1}(p-\nu)!(\nu+1)!}.\label{eq:q_p_0}
  \end{align}
  Moreover, since $k<\nu+1$,
  \begin{align*}
    \psi^{(\nu+1)}_{p,k}(1)
      & = L_p^{(\nu+1-k)} (1) = \frac{1}{2^{\nu+1-k}(\nu+1-k)!}
                                \frac{(p+\nu+1-k)!}{(p-\nu-1+k)!}, \\
    \psi^{(\nu+1)}_{p+1,k}(1)
      & = L_{p+1}^{(\nu+1-k)} (1) =\frac{1}{2^{\nu+1-k}(\nu+1-k)!}
                                   \frac{(p+\nu+2-k)!}{(p-\nu+k)!},
  \end{align*}
  and
  \begin{gather*}
      \alpha_{p,k} = \frac{- 1}{(-2)^{k+1} \cdot k!} \frac{(p+1+k)!}{(p+1-k)!},\qquad
      \beta_{p,k}  = \frac{- 1}{(-2)^{k+1} \cdot k!} \frac{(p+k)!}{(p-k)!}.
  \end{gather*}
  Therefore,
  \begin{gather*}
    q^{(\nu+1)}_{p,\nu}(1)
      = \frac{1}{2^{\nu+2}(\nu+1)!}
        \left[ \frac{(p+\nu+1)!}{(p-(\nu+1))!}  + 
               \frac{(p+\nu+2)!}{(p-\nu)!} \right] - S,
  \end{gather*}
  where
  \begin{gather}\label{eq:S}
    S = \sum_{k=1}^{\nu} \frac{(-1)^{k}}{k!}
                         \frac{1}{2^{\nu+2}(\nu+1-k)!}
                         \frac{(p+k)!}{(p-k)!}
                         \frac{(p+1+\nu-k)!}{(p-\nu-1+k)!}
                         \left\{\frac{p+1+k}{p+1-k} + \frac{p+\nu+2-k}{p-\nu+k}
                         \right \}. 
  \end{gather}
  We find\footnote{%
  The two expression for $S$, given by~\eqref{eq:S} and~\eqref{eq:claim},
  form a WZ-pair in the sense of \cite{MR1007910}.
  Theorem A in that paper applies and gives the identity of those two
  expressions.
  It's worth mentioning that MAPLE can evaluate the sum in~\eqref{eq:S}
  and gives the same result.
  This is not surprising as the results from~\cite{MR1007910} have been implemented
  in MAPLE, see \cite{MR1048463}.
  }
  \begin{gather}\label{eq:claim}
    S=-\frac{1}{2}\frac{(p+1)(p+\nu+1)!}{(p-\nu)!2^\nu(\nu+1)!} ((-1)^{\nu}-1).
  \end{gather}
  Hence when $\nu$ is even the value of the sum is $0$, and we have
  \begin{gather*}
    q^{(\nu+1)}_{p,\nu}(1)
      = \frac{1}{(-2)^{\nu+2}(\nu+1)!}
        \left[ \frac{(p+\nu+1)!}{(p-(\nu+1))!}  + 
               \frac{(p+\nu+2)!}{(p-\nu)!}
        \right],
  \end{gather*}
  which shows the desired result.
  When $\nu$ is odd, we have, using \eqref{eq:q_p_0} and \eqref{eq:claim},
  \begin{gather*}
    q^{(\nu+1)}_{p,\nu}(1)
      = \frac{(p+1)(p+1+\nu)!}{2^{\nu+1}(p-\nu)!(\nu+1)!}
        - \frac{(p+1)(p+1+\nu)!}{2^{\nu}(p-\nu)! (\nu+1)!}
      = -\frac{1}{2} q_{p,0}^{(\nu+1)}(1),
  \end{gather*}
  which again leads to the desired result.
\end{proof}

When establishing exponential convergence of spectral methods it is usefull
to bound $\norm{q_{p,\nu}}_0$ in terms of powers of $p$.
\begin{lemma}
  \label{lem:q_vs_p_nu}
  The exists a constant $C$, that is independent of $p$ and $\nu$ such
  That
  \begin{gather*}
    \norm{q_{p,\nu}}_0^2 \leq C p^{2\nu-1}, \ \ p,\nu\in\NN.
  \end{gather*}
\end{lemma}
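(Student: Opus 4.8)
The plan is to express $\norm{q_{p,\nu}}_0^2$ explicitly using the recursive construction \eqref{eq:q_p} together with the orthogonality of the Legendre polynomials, and then to estimate the resulting terms. From \eqref{eq:q_p} we have $q_{p,\nu} = q_{p,0} + \sum_{k=1}^\nu \left\{\alpha_{p,k}\psi_{p,k} + \beta_{p,k}\psi_{p+1,k}\right\}$. Since $\psi_{p,k}\in\Lambda_{p-k}^{p+k}$ and $\psi_{p+1,k}\in\Lambda_{p+1-k}^{p+1+k}$, all terms lie in $\Lambda_{p-\nu}^{p+\nu+1}$, and expanding in the Legendre basis gives a sum of squared coefficients weighted by $\norm{L_i}_0^2 = 2/(2i+1)$. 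Rather than expand fully, the cleaner route is to use $\norm{a+b}_0^2 \le 2\norm{a}_0^2 + 2\norm{b}_0^2$ repeatedly (or more sharply, the triangle inequality $\norm{q_{p,\nu}}_0 \le \norm{q_{p,0}}_0 + \sum_{k=1}^\nu(\abs{\alpha_{p,k}}\norm{\psi_{p,k}}_0 + \abs{\beta_{p,k}}\norm{\psi_{p+1,k}}_0)$), and then bound each piece.

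The three ingredients we need are: (i) $\norm{q_{p,0}}_0^2 = \scal{(L_p+L_{p+1})/2}{(L_p+L_{p+1})/2} = \tfrac14(\norm{L_p}_0^2 + \norm{L_{p+1}}_0^2)$, which is $O(1/p)$, certainly $\le Cp^{2\cdot0-1}$; (ii) the coefficients from \autoref{lem:q_nu}, namely $\abs{\alpha_{p,k}} = \tfrac{1}{2^{k+1}k!}\tfrac{(p+1+k)!}{(p+1-k)!}$ and $\abs{\beta_{p,k}} = \tfrac{1}{2^{k+1}k!}\tfrac{(p+k)!}{(p-k)!}$, both of which are bounded by $\tfrac{1}{2^{k+1}k!}(p+1+k)^{2k}$; and (iii) the norms $\norm{\psi_{p,k}}_0^2$ from \autoref{lem:L2norm_psi}, which equal $\tfrac{2^{k+1}}{k!}\tfrac{1}{2p+1}\prod_{j=1}^k\tfrac{2j-1}{(2p+1)^2-4j^2}$. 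The key cancellation is that $\abs{\alpha_{p,k}}^2\norm{\psi_{p,k}}_0^2$ combines the $2^{-2k-2}/(k!)^2$ from the coefficient with the $2^{k+1}/k!$ from the norm, and the factorial ratio $\bigl(\tfrac{(p+1+k)!}{(p+1-k)!}\bigr)^2$ against $\prod_{j=1}^k\tfrac{2j-1}{(2p+1)^2-4j^2}$; since $(2p+1)^2 - 4j^2 \ge (2p+1)^2 - 4k^2 \sim 4p^2$ for $j\le k\le\nu\le p$ (one needs $k < p$ strictly, or a separate check at the boundary), the product is $O(p^{-2k})$ up to a bounded combinatorial factor $\prod_{j=1}^k(2j-1)$, and the factorial ratio squared is $O(p^{4k})$. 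Net, each term $\abs{\alpha_{p,k}}^2\norm{\psi_{p,k}}_0^2$ is $O(p^{4k}\cdot p^{-2k}\cdot p^{-1}) = O(p^{2k-1})$, with a constant that — after tracking the $2$'s and $k!$'s — is summable in $k$; the dominant term is $k=\nu$, giving $O(p^{2\nu-1})$.

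The main obstacle will be controlling the constants uniformly in both $\nu$ and $p$: one must check that $\sum_{k=1}^\nu$ of terms of size $O(p^{2k-1})$ is genuinely $O(p^{2\nu-1})$ with a $\nu$-independent constant, i.e. that the ratio between consecutive terms ($k$ to $k+1$) is bounded below by something like a fixed multiple of $p^2$ so that the geometric-type sum is dominated by its last term. Concretely, after writing $T_k := \abs{\alpha_{p,k}}^2\norm{\psi_{p,k}}_0^2 + \abs{\beta_{p,k}}^2\norm{\psi_{p+1,k}}_0^2$, one computes $T_{k+1}/T_k$ and shows it is $\ge 1$ (or at least bounded below by a constant) for the relevant range, so that $\sum_{k=1}^\nu T_k \le \nu\, T_\nu$ is too weak but $\sum T_k \le C T_\nu$ holds once the ratio exceeds, say, $2$; care is needed near $k \approx p$ where the factors $(2p+1)^2 - 4k^2$ degenerate, and it may be cleanest to treat $\nu \le p/2$ and $p/2 < \nu \le p$ separately, or simply to use the crude bound $(2p+1)^2 - 4k^2 \ge 2p+1$ (valid since $k\le p$ forces $2k \le 2p < 2p+1$, so $(2p+1)^2-4k^2 = (2p+1-2k)(2p+1+2k)\ge 1\cdot(2p+1)$) in the regime where the sharp bound fails. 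Once the dominant term $T_\nu$ is shown to be $O(p^{2\nu-1})$ and the tail is absorbed, combining with $\norm{q_{p,0}}_0^2 = O(p^{-1}) \le O(p^{2\nu-1})$ for $\nu\ge 1$ via the triangle inequality (squaring picks up at most a factor $(\nu+1)$, again absorbed once geometric decay is in hand) completes the proof.
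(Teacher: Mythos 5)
Your plan is, in essence, the paper's own proof: the paper also starts from the recursion \eqref{eq:q_p}, takes the coefficients $\alpha_{p,k},\beta_{p,k}$ from \autoref{lem:q_nu} and the norms $\norm{\psi_{p,k}}_0$ from \autoref{lem:L2norm_psi}, uses the triangle inequality, and finds that each increment contributes $O(p^{2k-1})$ after the same cancellation of the powers of $2$ and factorials you describe. The only structural difference is that the paper does not unroll the recursion: it argues by induction on $\nu$, writing $\norm{q_{p,\nu+1}}_0^2\le 3\norm{q_{p,\nu}}_0^2+3A_{p,\nu+1}+3B_{p,\nu+1}$ and absorbing the inductive term since $3p^{2\nu-1}\le p^{2\nu+1}$ for $p\ge2$. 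This disposes automatically of the ``is the sum dominated by its last term'' bookkeeping that you identify as your main obstacle, with no ratio test over $k$ and no extra factor $\nu$ from squaring a sum of $\nu+1$ terms.

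Two points in your sketch need repair. First, the fallback bound $(2p+1)^2-4k^2\ge 2p+1$ for the regime $k\approx p$ does not suffice: with it the product in \autoref{lem:L2norm_psi} decays only like $\bigl((2k-1)/(2p+1)\bigr)^k\sim p^{-k}$ instead of $p^{-2k}$, so $T_k$ would come out as $O(p^{3k-1})$, overshooting the target $p^{2\nu-1}$ at $k=\nu$ by a factor $p^{\nu}$. One needs the quadratic decay, e.g. $(2p+1)^2-4k^2\ge(2p+1-2k)^2\ge(2p-2\nu-1)^2$ as in the paper, and the degeneration as $\nu$ approaches $p$ must then be paid for by the factorial $(k!)^3$ (resp. $((\nu+1)!)^3$) in the denominator -- a point where the paper itself is terse, since its final step from $(p+\nu)^{4\nu+4}/\bigl((\nu+1)!^2(p-\nu)^{2\nu+3}\bigr)$ to $Cp^{2\nu+1}$ is immediate only when $p-\nu$ is comparable to $p$ (for instance $\nu\le p/2$, where $2^{6\nu}/((\nu+1)!)^2$ is uniformly bounded). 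Second, the factor $\prod_{j=1}^k(2j-1)=(2k-1)!!$ is not a ``bounded combinatorial factor''; it grows superexponentially in $k$ and is harmless only because it is weighed against $(k!)^3$. So the ingredients and the term-by-term estimate are the right ones, but the degenerate regime $k\approx p$ is not actually covered by your proposed fix, and the inductive formulation is the cleaner way to close the argument.
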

\begin{proof}
  We will use induction on $\nu$, with $\nu = 0, 1$ established above (see the Example). So we assume the result holds for $\nu$ and we will show it for $\nu+1$. We have from \eqref{eq:q_p},
$$
\left\|q_{p, \nu+1}\right\|_0 \leq\left\|q_{p, \nu}\right\|_0+\left|\alpha_{p, \nu+1}\right|\left\|\psi_{p, \nu+1}\right\|_0+\left|\beta_{p, \nu+1}\right|\left\|\psi_{p+1, \nu+1}\right\|_0.
$$
Using $(a+b+c)^2 \leq 3\left(a^2+b^2+c^2\right)$, we see that

$$
\left\|q_{p, \nu+1}\right\|_0^2 \leq 3\left\|q_{p, \nu}\right\|_0^2+3 A_{p, \nu+1}+3 B_{p, \nu+1},
$$
where
$$
A_{p, \nu+1}=\alpha_{p, \nu+1}^2\left\|\psi_{p, \nu+1}\right\|_0^2, \quad B_{p, \nu+1}=\beta_{p, \nu+1}^2\left\|\psi_{p+1, \nu+1}\right\|_0^2 .
$$
We first deal with $A_{p, \nu+1}$.
Applying Lemmas~\ref{lem:L2norm_psi} and~\ref{lem:q_nu} to
$A_{p,\nu+1}$, we get
  \begin{gather*}
    A_{p, \nu+1}=\frac{((p+\nu+2)!)^2}{2^{\nu+2}((\nu+1)!)^3((p-\nu)!)^2}
                 \frac{1}{2 p+1} \prod_{k=1}^{\nu+1} \frac{2 k-1}{(2 p+1)^2-4 k^2} .
  \end{gather*}
  Now, for $k \leq \nu+1 \leq p$, there holds
  \begin{gather*}
    (2 p+1)^2-4 k^2 \geq(2 p+1-2 k)^2 \geq(2 p-2 \nu-1)^2 ,
  \end{gather*}
  thus
  \begin{gather*}
    \prod_{k=1}^{\nu+1} \frac{2 k-1}{(2 p+1)^2-4 k^2} \leq \frac{(2 \nu+1)^{\nu+1}}{(2 p-2 \nu-1)^{2(\nu+1)}}.
  \end{gather*}
  Therefore,
  \begin{gather*}
    A_{p, \nu+1} \leq C \frac{((p+\nu+2)!)^2}{((\nu+1)!)^3((p-\nu)!)^2}
                        \frac{(\nu+1)^{\nu+1}}{(p-\nu)^{2 \nu+3}}.
  \end{gather*}
  Similarly for $B_{p, \nu+1}$:
  \begin{align*}
    B_{p, \nu+1} & = \frac{((p+\nu+1)!)^2}{2^{\nu+2}((\nu+1)!)^3((p-\nu-1)!)^2}
         \frac{1}{2 p+3} \prod_{k=1}^{\nu+1} \frac{2 k-1}{(2 p+3)^2-4 k^2} \\
      & \le C \frac{((p+\nu+1)!)^2}{((\nu+1)!)^3((p-\nu-1)!)^2} \frac{(\nu+1)^{\nu+1}}{(p-\nu)^{2 \nu+3}}
  \end{align*}
  Using
  \begin{gather*}
   \frac{(p+\nu+2)!}{(p-\nu)!} \le (p+\nu+2)^{2 \nu+2},
  \end{gather*}
  we arrive at
  \begin{gather*}
  \norm{q_{p, \nu+1}}_0^2
    \le 3 \norm{q_{p, \nu}}_0^2
          + C \frac{(p+\nu)^{4 \nu+4}}{(\nu+1)!^2(p-\nu)^{2 \nu+3}}
    \le  C \left(p^{2\nu-1} + p^{2\nu +1}\right)
    \le C p^{2\nu+1}.
  \end{gather*}
  This completes the proof.
\end{proof}

\pagebreak

\section{\boldmath Traces of the $L^2$ projection error}
\label{sect:main2}

We are now in the position to state and prove the second main result of the article.
\begin{theorem}\label{thm:main}
  Let \mbox{$\nu \in \NN$}, \mbox{$w \in H^{k}(\Omega)$}
  and let \mbox{$\pi_p w \in \PP_p$} be its $L^2$ projection,
  with \mbox{$p > \nu$}.

  Then, for \mbox{$s \in (0, \min\{k,p-\nu\})$}, there holds
  \begin{gather*}
    \abs{\left(w - \pi_p w\right)^{(\nu)}(\pm1)}^2
      \le \norm{q_{p,\nu}}_0^2 \frac{(p-\nu-s)!}{(p+\nu+s)!} \abs{w}_{s+\nu+1}^2.
  \end{gather*}
\end{theorem}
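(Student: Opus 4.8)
The plan is to begin from the exact integral representation contained in the proof of \autoref{lem:point-Hr}, to gain the extra decay by exploiting the orthogonality that $q_{p,\nu}$ and its primitives possess, and to close the argument with the sharp one-dimensional projection estimate \eqref{eq:L2proj_bound}.

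First I would extract from the proof of \autoref{lem:point-Hr} the identity (valid because $q_{p,\nu}\in\Lambda_{p-\nu}^{p+\nu+1}$ forces $q_{p,\nu}^{(\nu+1)}\in\PP_p$ and $q_{p,\nu}\perp\PP_{p-\nu-1}$)
\begin{gather*}
  \left(w-\pi_p w\right)^{(\nu)}(1) = (-1)^\nu\int_{-1}^1 w^{(\nu+1)}\,q_{p,\nu},
\end{gather*}
the value at $-1$ being handled identically with $1-q_{p,\nu}$. Writing $q_{p,\nu}=\sum_{j=p-\nu}^{p+\nu+1}d_jL_j$ for its \textsc{Legendre} expansion, I set $\Phi\coloneqq\sum_{j=p-\nu}^{p+\nu+1}d_j\psi_{j,s}\in\Lambda_{p-\nu-s}^{p+\nu+1+s}$, the $s$-th primitive of $q_{p,\nu}$; since $s\le p-\nu$, every index obeys $j\ge s$, so \eqref{leg:prim:ne0} gives $\Phi^{(i)}(\pm1)=0$ for $i=0,\dots,s-1$, and $s$-fold integration by parts produces no boundary terms:
\begin{gather*}
  \int_{-1}^1 w^{(\nu+1)}\,q_{p,\nu} = (-1)^s\int_{-1}^1 w^{(\nu+1+s)}\,\Phi .
\end{gather*}
As $\Phi\perp\PP_{p-\nu-s-1}$ I subtract $\pi_{p-\nu-s-1}w^{(\nu+1+s)}$ before \textsc{Cauchy-Schwarz}, and then \eqref{eq:L2proj_bound} --- with projection order $0$, so as not to raise the regularity demand beyond $\abs{w}_{s+\nu+1}$ --- yields
\begin{gather*}
  \bigl|\left(w-\pi_p w\right)^{(\nu)}(\pm1)\bigr|^2 \le \norm{\Phi}_0^2\,\abs{w}_{s+\nu+1}^2 .
\end{gather*}

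What remains is the estimate $\norm{\Phi}_0^2 \le \norm{q_{p,\nu}}_0^2\,\tfrac{(p-\nu-s)!}{(p+\nu+s)!}$. I would expand $\norm{\Phi}_0^2=\sum_{i,j}d_id_j\,\scal{\psi_{i,s}}{\psi_{j,s}}$, insert \autoref{lem:L2norm_psi} on the diagonal and \autoref{prop:leg:scal} on the off-diagonal (nonzero only when $|i-j|\le 2s$ and $i+j$ is even), and compare the resulting quadratic form with $\norm{q_{p,\nu}}_0^2=\sum_j d_j^2\,\tfrac{2}{2j+1}$. The factorial quotients delivered by \autoref{prop:leg:scal} --- products $\prod_{k=1}^s\frac{2k-1}{(2j+1)^2-4k^2}$, i.e.\ $\tfrac{1}{(2j+1-2s)(2j+3-2s)\cdots(2j+1+2s)}$ --- are precisely what telescopes, after summing over the band, into $\tfrac{(p-\nu-s)!}{(p+\nu+s)!}$.

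That last step is the main obstacle. Because $\Phi$ is a genuine linear combination over the whole band $[p-\nu,\,p+\nu+1]$, the off-diagonal inner products $\scal{\psi_{i,s}}{\psi_{j,s}}$ alternate in sign (the $(-1)^k$ of \autoref{prop:leg:scal}) in step with the coefficients $d_j$ of $q_{p,\nu}$, and the required decay is produced only by their cancellation; the crude diagonal bound $\norm{\Phi}_0^2\le(2\nu+2)\max_jd_j^2\max_j\norm{\psi_{j,s}}_0^2$ is far too weak. Making this cancellation quantitative and uniform in $\nu,s,p$ is where the real work --- and the need for the explicit identity of \autoref{prop:leg:scal} rather than merely the norm formula of \autoref{lem:L2norm_psi} --- lies. (One may instead subtract $\pi_{p-\nu-1}w^{(\nu+1)}$ directly in the first identity, apply \textsc{Cauchy-Schwarz} and \eqref{eq:L2proj_bound} with order $s$; this bypasses \autoref{prop:leg:scal} but only gives $\tfrac{(p-\nu-s)!}{(p-\nu+s)!}$, so the primitive route is what tightens the denominator to $(p+\nu+s)!$.)
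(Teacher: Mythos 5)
You never prove the decisive inequality $\norm{\Phi}_0^2 \le \frac{(p-\nu-s)!}{(p+\nu+s)!}\,\norm{q_{p,\nu}}_0^2$; you flag it yourself as ``the main obstacle'', so the proposal is incomplete exactly where the claimed gain would have to come from. Worse, that step cannot be filled in: your chain up to $\abs{\left(w-\pi_p w\right)^{(\nu)}(\pm1)}^2\le\norm{\Phi}_0^2\abs{w}_{s+\nu+1}^2$ is valid, so the missing inequality would imply the theorem with the factor $(p+\nu+s)!$, and that bound fails for $\nu\ge1$. Take $w=\psi_{p-\nu-s,\,s+\nu+1}$, so that $w^{(s+\nu+1)}=L_{p-\nu-s}$ and $w-\pi_p w=c\,L_{p+1}$ with $c=\prod_{k=0}^{s+\nu}\bigl(2(p-\nu-s+k)+1\bigr)^{-1}$. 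The left-hand side then scales like $p^{2\nu-2s-2}$, while the right-hand side with $(p+\nu+s)!$ scales like $p^{-2s-2}$; concretely, for $p=10$, $\nu=s=1$, one gets $\abs{\left(w-\pi_{10}w\right)'(1)}^2=(66/6783)^2\approx 9.5\cdot10^{-5}$, whereas $\norm{q_{10,1}}_0^2\,\tfrac{8!}{12!}\,\abs{w}_3^2\approx 7.4\cdot10^{-6}$. Hence no amount of cancellation in the quadratic form $\sum_{i,j}d_id_j\scal{\psi_{i,s}}{\psi_{j,s}}$ can deliver the ratio you need: a single mode of the band already caps the gain at $(j-s)!/(j+s)!\sim p^{-2s}$, short of $p^{-2s-2\nu}$, and the example shows the full band does no better up to constants.

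The route you discard in your closing parenthesis is in fact the paper's proof: choose $W\in\PP_p$ with $W^{(\nu+1)}=\pi_{p-\nu-1}\left(w^{(\nu+1)}\right)$, note $\pi_pW=W$, apply \autoref{lem:point-Hr} to $w-W$, and then \eqref{eq:L2proj_bound} with $p$ replaced by $p-\nu-1$ --- precisely ``subtract $\pi_{p-\nu-1}w^{(\nu+1)}$, Cauchy--Schwarz, order $s$''. As you correctly observe, this yields the denominator $(p-\nu+s)!$, which coincides with $(p+\nu+s)!$ only for $\nu=0$. So the discrepancy you sensed is real, but the fault lies with the stated factorial rather than with the ``weak'' argument: what is provable (and, by the example above, essentially sharp) is the bound with $(p-\nu-s)!/(p-\nu+s)!$, and your parenthetical argument --- i.e.\ the paper's own --- already gives it; the $\Phi$-construction chasing $(p+\nu+s)!$ is a dead end.
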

\begin{proof}
  Let \mbox{$W\in\PP_p$} be arbitrary with
  \begin{gather*}
    W^{(\nu+1)} = \pi_{p-\nu-1} \left(w^{(\nu+1)}\right).
  \end{gather*}
  Clearly $\pi_p W = W$, since $W\in\PP_p$.
  Therefore,
  \begin{gather*}
    \abs{\left(w - \pi_p w\right)^{(\nu)}(\pm1)}
       = \abs{\left(w - W\right)^{(\nu)}(\pm1)
               - \left(\pi_p\left(w - W\right)\right)^{(\nu)}(\pm1)}.
  \end{gather*}
  Next, \autoref{lem:point-Hr} implies
  \begin{gather*}
    \abs{\left(w - \pi_p w\right)^{(\nu)}(\pm1)}
     \le \norm{q_{p,\nu}}_0 \abs{w - W}_{\nu+1}
     =   \norm{q_{p,\nu}}_0 \norm{w^{(\nu+1)} - \pi_{p-\nu-1}\left(w^{(\nu+1)}\right)}_0\,.
  \end{gather*}
  Application of \eqref{eq:L2proj_bound}, i.\,e. \cite[Thm.\,3.11]{MR1695813}, with
  $p$ replaced by $p-\nu-1$ yields the desired result.
\end{proof}

\begin{remark}
  For $\nu=0$ we recover \eqref{eq:L2proj_bound_trace}
  from Houston et al. \cite{MR1897953} --
  with the aforementioned slight improvement.
\end{remark}

In combination with \autoref{lem:q_vs_p_nu} we obtain the following
\begin{corollary}
  Let \mbox{$\nu \in \NN$}, \mbox{$w \in H^{k}(\Omega)$}
  and let \mbox{$\pi_p w \in \PP_p$} be its $L^2$ projection,
  with \mbox{$p > \nu$}.

  Then, for \mbox{$s \in (0, \min\{k,p-\nu\})$}, there exists a constant
  $C$ independent of $p$, $\nu$ and $s$ such that
  \begin{gather*}
    \abs{\left(w - \pi_p w\right)^{(\nu)}(\pm 1)}^2
       \le C  p^{2\nu-1} \frac{(p-\nu-s)!}{(p+\nu+s)!} \abs{w}_{s+\nu+1}^2.
  \end{gather*}
\end{corollary}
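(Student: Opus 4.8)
The plan is to obtain the corollary as an immediate consequence of \autoref{thm:main} and \autoref{lem:q_vs_p_nu}; essentially no new work is required beyond checking that the hypotheses line up.

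First I would invoke \autoref{thm:main}. Under the hypotheses $\nu\in\NN$, $w\in H^{k}(\Omega)$, $p>\nu$ and $s\in(0,\min\{k,p-\nu\})$ --- which are exactly those imposed in the corollary --- it yields, simultaneously for both endpoints,
\[
  \abs{\left(w-\pi_p w\right)^{(\nu)}(\pm1)}^2
    \le \norm{q_{p,\nu}}_0^2\,\frac{(p-\nu-s)!}{(p+\nu+s)!}\,\abs{w}_{s+\nu+1}^2 .
\]
Next I would insert the bound from \autoref{lem:q_vs_p_nu}, namely $\norm{q_{p,\nu}}_0^2\le C p^{2\nu-1}$ with $C$ independent of $p$ and $\nu$; its hypothesis $p,\nu\in\NN$ is covered here since $p>\nu\ge 1$. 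Substituting this into the previous display gives
\[
  \abs{\left(w-\pi_p w\right)^{(\nu)}(\pm1)}^2
    \le C\,p^{2\nu-1}\,\frac{(p-\nu-s)!}{(p+\nu+s)!}\,\abs{w}_{s+\nu+1}^2 ,
\]
which is the claimed estimate, the constant $C$ being inherited verbatim from \autoref{lem:q_vs_p_nu} and therefore independent of $p$, $\nu$ and $s$.

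There is no genuine obstacle: this is a bookkeeping corollary, and the only point requiring a moment's care is that the admissible ranges of $\nu$, $p$ and $s$ in the two cited statements are mutually compatible with those asserted in the corollary, which they are.
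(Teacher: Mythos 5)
Your proposal is correct and is exactly the paper's own argument: the corollary is obtained by substituting the bound $\norm{q_{p,\nu}}_0^2 \le C p^{2\nu-1}$ from \autoref{lem:q_vs_p_nu} into the estimate of \autoref{thm:main}. Nothing is missing; the hypothesis check you note is the only point of care, and it holds.
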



\appendix

\section{Proof of \autoref{prop:leg:scal}}
\label{app:psi}
\begin{proof}
  To simplify the notation let \mbox{$\tp\coloneqq 2p+1$}.

  The proof is by induction for $n$.
  For \mbox{$n=0$} the orthogonality properties of the \textsc{Legendre}
  polynomials yield the desired result, because $\psi_{i,0}= L_{i}$, $i\in\NN_0$.

  Now, let the proposition hold for $n=m-1$.
  Note that $k\le p$. Therefore, by~\eqref{eq:Lambda_psi} we have
  \begin{gather}
    \label{eq:psi_psi_rec}
    \begin{split}
    \scal{\psi_{p+k,m}}{\psi_{p-k,m}}
      & = \frac{1}{\tp^2 - 4k^2}
            \Biggl\{   \scal{\psi_{(p+1)+k,m-1}}{\psi_{(p+1)-k,m-1}}
                    - \scal{\psi_{p+(k-1),m-1}}{\psi_{p-(k-1),m-1}} \\
      & \qquad\qquad\qquad
                    - \scal{\psi_{p+(k+1),m-1}}{\psi_{p-(k+1),m-1}}
                    + \scal{\psi_{(p-1)+k,m-1}}{\psi_{(p-1)-k,m-1}}
            \Biggr\}.
    \end{split}
  \end{gather}
  We have to distinguish 4 cases: $k>m$, $k=m$, $k=m-1$ and $k\le m-2$.

  \emph{(i)\ } If \mbox{$k>m$} then all 4 terms on the right-hand side
  of~\eqref{eq:psi_psi_rec} vanish.
  Thus
  \begin{gather*}
    \scal{\psi_{p+k,m}}{\psi_{p-k,m}} = 0 \ \ \text{for} \ k=m+1,m+2,\dots
  \end{gather*}

  \emph{(ii)\ } For $k<m-1$, the induction hypothesis and eq.~\eqref{eq:psi_psi_rec}
  imply
  \begin{align*}
    & \left(\tp^2 - 4k^2\right) \scal{\psi_{p+k,m}}{\psi_{p-k,m}} \\
    & \qquad
      = (-1)^k \frac{2^{m}\,(m-1)!}{(m-1+k)! (m-1-k)!}
               \frac{1}{\tp+2}
               \prod_{i=1}^{m-1} \frac{2i-1}{\left(\tp+2\right)^2 - 4i^2} \\
    & \qquad\qquad
      - (-1)^{k-1} \frac{2^{m}\,(m-1)!}{(m-1+k-1)! (m-1-k+1)!}
               \frac{1}{\tp}
               \prod_{i=1}^{m-1} \frac{2i-1}{\tp^2 - 4i^2} \\
    & \qquad\qquad
      - (-1)^{k+1} \frac{2^{m}\,(m-1)!}{(m-1+k+1)! (m-1-k-1)!}
               \frac{1}{\tp}
               \prod_{i=1}^{m-1} \frac{2i-1}{\tp^2 - 4i^2} \\
    & \qquad\qquad
      + (-1)^k \frac{2^{m}\,(m-1)!}{(m-1+k)! (m-1-k)!}
               \frac{1}{\tp-2}
               \prod_{i=1}^{m-1} \frac{2i-1}{\left(\tp-2\right)^2 - 4i^2} \\
    & \qquad
      = (-1)^k \frac{2^{m}\,(m-1)!}{(m+k)! (m-k)!}
               \prod_{i=1}^{m-1} \left(2i-1\right) \\
    & \qquad\qquad \times \Biggl\{
        (m+k)(m-k) \prod_{i=1-m}^{m-1} \frac{1}{\tp+2i+2}
        + (m+k)(m+k-1) \prod_{i=1-m}^{m-1} \frac{1}{\tp+2i} \tag{$\bigstar$} \\
    & \qquad\qquad\qquad\qquad
        + (m-k)(m-k-1) \prod_{i=1-m}^{m-1} \frac{1}{\tp+2i}
        + (m+k)(m-k) \prod_{i=1-m}^{m-1} \frac{1}{\tp+2i-2}
        \Biggr\} \\
    & \qquad
      = (-1)^k \frac{2^{m}\,(m-1)!}{(m+k)! (m-k)!}
               \prod_{i=1}^{m-1} \left(2i-1\right)
               \prod_{i=-m}^{m} \frac{1}{\tp+2i} \\
    & \qquad\qquad \times \Biggl\{
        \left(m^2-k^2\right) \left(\tp-2m\right)\left(\tp-2(m-1)\right)
        + 2\left(m^2+k^2+m\right)\left(\tp-2m\right)\left(\tp+2m\right) \\
    & \qquad\qquad\qquad\qquad
        + \left(m^2-k^2\right) \left(\tp+2(m-1)\right)\left(\tp+2m\right)
        \Biggr\}.
  \end{align*}
  The terms within the braces evaluate to\footnote{%
  \begin{align*}
    \Bigl\{\ \cdots\ \Bigr\}
         & = \left(m^2-k^2\right)
             \left[\left(\tp-2m\right)^2 + 2\left(\tp-2m\right)
                      +\left(\tp+2m\right)^2 - 2\left(\tp+2m\right)\right]
             + 2 \left(m^2+k^2-m\right)\left(\tp^2-4m^2\right) \\
         & = 2 \left(m^2-k^2\right)
               \left[\tp^2+4m^2 - 4m \right]
             + 2 \left(m^2+k^2-m\right)\left(\tp^2-4m^2\right)
           = 2 \tp^2 \left(2m^2-m\right) - 16 m^2k^2 + 8mk^2 \\
         & = 2 \tp^2 \left(2m-1\right)m + 2 (2m-1)m \left(-4k^2\right)
           = 2 \left(\tp^2-k^2\right) \left(2m-1\right)m
  \end{align*}
  }
  $2 \left(\tp^2-k^2\right) \left(2m-1\right)m$, and we obtain
  \begin{gather*}
    \scal{\psi_{p+k,m}}{\psi_{p-k,m}}
      = (-1)^k \frac{2^{m+1}\,m!}{(m+k)! (m-k)!}
               \prod_{i=1}^{m} \left(2i-1\right)
               \prod_{i=-m}^{m} \frac{1}{\tp+2i}.
  \end{gather*}

  \emph{(iii)\ } The cases $k=m$ and $k=m-1$ can be absorbed into \textit{(i)}.
  For \mbox{$k=m$} three terms in eq.~\eqref{eq:psi_psi_rec} vanish.
  They appear in ($\bigstar$) with the coefficient \mbox{$(m-k)$}.
  For \mbox{$k=m-1$} only one of those terms vanishes.
  In ($\bigstar$) it comes with the coefficient \mbox{$(m-k-1)$}.
\end{proof}

\end{document}